

\documentclass[12pt,twoside,a4paper]{article}  
\usepackage{titling}
\usepackage{titletoc}
\usepackage{url}

\usepackage[utf8]{inputenc}
\usepackage[T1]{fontenc}
\usepackage{amsmath}
\usepackage{amsfonts}
\usepackage{amssymb}
\usepackage[numbers]{natbib}

\usepackage[bookmarksopen=false,breaklinks=true,%
      backref=page,pagebackref=true,plainpages=false,%
      hyperindex=true,pdfstartview=FitH,%
      pdfpagelabels=true,
      colorlinks=true,linkcolor=blue,%
      citecolor=red,urlcolor=red,
      colorlinks=true%
      ]%
   {hyperref}

\pagestyle{headings}
\marginparwidth 0pt
\oddsidemargin -.5cm
\evensidemargin -.5cm
\marginparsep 0pt
\topmargin -1.5cm
\textwidth 17cm
\textheight 25cm 
\sloppy

\newtheorem{theorem}{Theorem}[subsection]

\newtheorem{proposition}[theorem]{Proposition}
\newtheorem{remark}[theorem]{Remark}
\newtheorem{remarks}[theorem]{Remarks}
\newtheorem{example}[theorem]{Example}

\newtheorem{corollary}[theorem]{Corollary}
\newtheorem{notation}[theorem]{Notation}
\newtheorem{definition}[theorem]{Definition}
\newtheorem{definota}[theorem]{Definition and notation}

\newtheorem{comput}{Computational Problem}
\newcounter{bidon}

\DeclareSymbolFont{lasy}{U}{lasy}{m}{n}
\SetSymbolFont{lasy}{bold}{U}{lasy}{b}{n}
\let\Box\undefined
\DeclareMathSymbol\Box{\mathord}{lasy}{"32}

\def \I{\mathbb{I}}
\def \J{\mathbb{J}}
\def \N{\mathbb{N}}
\def \Q{\mathbb{Q}}

\def \Z{\mathbb{Z}}

\def \S{\mathbf{S}}
\def \P{\mathbf{P}}
\def \V{\mathbf{V}}
\def \K{\mathbf{K}}

\def \R{\mathbf{R}}

\def \Inp{\textbf{Input: }}
\def \Output{\textbf{Output: }}

\def \Krc{\K^{\mathrm{rc}}}
\def \Vrc{\V^{\mathrm{rc}}}
\def \Prc{\P^{\mathrm{rc}}}
\def \Kac{\K^{\mathrm{ac}}}
\def \Vac{\V^{\mathrm{ac}}}

\def \KVP{(\K,\alb\V,\alb\P)}
\def \RVP{(\R,\alb\V_\R,\alb\P_\R)}
\def \KVPrc{(\Krc,\alb\Vrc,\alb\Prc)}

\def \ResK{\overline{\K}}
\def \ResKrc{\overline{\Krc}}

\def \G{\Gamma}
\def \GK{\G_{\K}}
\def \GKrc{\G_{\Krc}}

\def \GKd{\Gamma_\K^{dh}}

\def \GKi{\GK\cup\{+\infty\}}

\def \GKdi{{\GKd}\cup\{+\infty\}}

\def \cC{\mathcal{C}}
\def \U{\mathcal{U}}
\def \M{\mathcal{M}}
\def \MV{\M_{\V}}
\def \UV{\U_{\V}}

\def \sign{\mathrm{sign}}
\def \Sperv{\mathrm{Sperv}}

\def \Land{\bigwedge}

\def \qsl{$\Q$-semilinear }

\def \ovf{ordered valued field }
\def \ovfs{ordered valued fields }
\def \ovfz{ordered valued field}
\def \ovfsz{ordered valued fields}
\def \ovsf{ordered valued subfield }

\def \rcvf{real closed valued field }
\def \rcvfs{real closed valued fields }
\def \rcvfz{real closed valued field}
\def \rcvfsz{real closed valued fields}

\def \rcf{real closed  field }

\def \rcfsz{real closed  fields}

\def \pgn{Newton poly\-gon }

\def \NPAz{Newton Poly\-gon algorithm}

\def \vco{$(\leq,\preceq)$--cons\-truc\-tible }

\def \vci{$(<,\preceq)$--interval }
\def \vcis{$(<,\preceq)$--intervals }

\def \num{\textrm{n}$^{\mathrm{o}}$}

\def \xn {x_1,\ldots,x_n}
\def \Kx {\K[\xn]}

\def\eop{\hbox{}\nobreak\hfill
\vrule width 1.4mm height 1.4mm depth 0mm \par \smallskip}

\def\alb{\allowbreak}
\def\noi{\noindent}
\def\sni{\smallskip\noindent}
\def \mni{\medskip\noindent}
\def \ms{\medskip}

\def\gen#1{\left\langle{#1}\right\rangle}

\newenvironment{proof}[1]{
\trivlist \item[\hskip \labelsep{\bf #1}]}{\hfill\mbox{$\Box$}
\endtrivlist}


\begin{document}

\title{Generalized Taylor formulae, \goodbreak
computations in real closed valued fields\goodbreak
and quantifier elimination
}
\author{ Mari-Emi Alonso
\and
Henri Lombardi}

\date{2002}

\maketitle

This paper appeared as

{{Alonso, Mari-Emi and Lombardi, Henri.
 {Generalized {T}aylor formulae, computations in real closed
              valued fields and quantifier elimination},
p.\ 33-57 in the book {\it Valuation theory and its applications, {V}ol.\ {I}
              ({S}askatoon, {SK}, 1999)},
{Fields Inst.\ Commun.\ Vol.\ {32},}
 {Amer.\ Math.\ Soc., Providence, RI},
{(2002)},
}

\begin{abstract}
We use generalized Taylor formulae in order to give some simple
constructions in the real closure of an \ovfz. We deduce a new, simple
quantifier elimination algorithm for \rcvfs and some theorems about
constructible subsets of real valuative affine space.
\end{abstract}

\medskip\noindent Key words: Valued fields, Real closed fields,
Generalized Taylor formulae, Quantifier elimination,
Constructive mathematics

\medskip\noindent MSC 2000: 14P10, 12J10, 12L05, 12Y05, 03F65, 03C10

\medskip  \noindent
Mari-Emi Alonso. Universitad Complutense, Madrid, Espa\~na.
\\ Partially supported by: PB95/0563-A.
{\tt mariemi@mat.ucm.es} \\
Henri Lombardi. Laboratoire de Math\'ematiques, UMR CNRS 6623.
Universit\'e de Franche-Comt\'e, France.
{\tt henri.lombardi@univ-fcomte.fr}

 \newpage

\section*{Introduction} \label{sec Introduction}
\addcontentsline{toc}{section}{Introduction}

In this work, we consider the real closure of an ordered valued field
and search for simple computations giving a constructive content to
this real closure. We don't try to give sophisticated algorithms
which would allow better complexity.

\medskip
We consider an \ovf $\KVP$ with $\V$ its valuation ring and
  $\P$ its positive cone. Recall that this means that the following
properties hold
$$\begin{array}{lll}
\V+\V\subseteq \V,&  \V\times \V\subseteq \V,
& \exists x\in \K\setminus \V,
\\
&   &  \forall x,y\in \K \; (xy=1\; \Rightarrow \; x\in \V \lor
y\in \V),   \\
\P\times\P\subseteq \P,&  \P+ \P\subseteq \P,
& \exists x\in \K\setminus \P,
\\
&   & \forall x,y\in \K \; (x+y=0\; \Rightarrow \; x\in \P \lor
y\in \P),
\end{array}$$
$$\qquad \forall
x,y\in\K\; [(x+y\in\V,x\in\P,y\in\P)\;\Rightarrow\;x\in\V].
$$
For $a,b$ in $\K$, we write $a \le b$ if and only if $b-a$ is in $\P$.
We shall use freely in the sequel some well known features of \ovfsz:
$\Q\subseteq \V$,
elements of  $\V$ bounded from below by some positive rational
are units in $\V$, and the non-units in $\V$ are the infinitesimal elements of~$\K$.

\smallskip Let $\S$ be a subring of $\V$ such that $\K$ is the
fraction field of $\S$.
We assume that $\S$ is an explicit ordered ring and that
divisibility inside $\V$ is testable for two arbitrary elements of
$\S$.
These are our minimal assumptions of computability.
If we want more assumptions in certain cases we shall make them explicit.

\smallskip We denote the real closure of $(\K,\P)$ by $(\Krc,\Prc)$,
and we write $\Vrc$ for the convex hull of $\V$ inside $\Krc$;
then $\Vrc$ is the unique order-compatible valuation ring extending $\V$.
We call $\KVPrc$  the real closure of $\KVP$.

\smallskip In sections \ref{sec Basic material} and  \ref{sec OVF} our
general purpose is to discuss computational problems
in $\KVPrc$ under our computability assumptions on
$\KVP$.

Each computational problem we shall consider has as
\textit{input} a
finite family $(c_i)_{i=1,\ldots,n}$ of
\textit{parameters} in the ring $\S$.
We call them the \textit{coefficients\/} of our computational problem.
Our algorithms with the previous minimal computability assumptions
work uniformly.
This means that some computations are made that give polynomials in
$\Z[C_1,\ldots,C_n]$,
and that all our tests are of the two following types:
$$ \mathrm{Is}\; P(c_1,\ldots,c_n) \geq 0?   \qquad
\mathrm{ Does}\;\;Q(c_1,\ldots,c_n)\;\;\mathrm{ divide} \;
\;P(c_1,\ldots,c_n)\;\;\mathrm{in}\;\V ?
$$
We are not interested in how the answers to these tests are found.
We may imagine these answers given either by some oracles or by some
algorithms.

\medskip
Let us state precisely some other notations.
We shall denote the unit group of $\V$ by
$\UV$, and $\MV ={\V} \setminus \UV$  will be the maximal ideal.

We shall denote the residue field $\V/\MV$ of $(\K,\V)$ by
$\ResK$, and the value group,
$\K^{{\times}}/\UV$, by~$\GK$.
We use freely the value group's usual additive notation as well as its usual
group-ordering (also denoted by $\le$).
Recall that $\GKrc$ is the  divisible hull  $\GKd$  of $\GK$.
For $x\in\K$ we write $v(x)$ or $v_\K(x)$ the valuation of $x$ in
$\GKi$. So
$$ v(0)=+\infty ,\; v(xy)=v(x)+v(y),\;
(x\geq 0,y\geq 0)\Rightarrow v(x+y)=\min(v(x),v(y)),
$$
and
$$ \forall x\in\K\;\; (
(v(x)\geq0\;\Leftrightarrow\; x\in\V) \;
\land\;
(v(x)> 0\;\Leftrightarrow\; x\in\MV)
).
$$

We write $\Kac=\Krc[\sqrt{-1}]$ and we denote by $\Vac$ the natural valuation
ring of $\Kac$ extending~$\Vrc$:
for $a,b\in\Krc$, $v(a+b\sqrt{-1})=(1/2)v(a^2+b^2)$.

\smallskip In fact elements of $\GKdi$ are always defined through
elements of
$\S$ in the following form.
We say that the valuation of some element $x$ belonging to
  $\Kac$ is \emph{well determined} if we know
integers $m$ and $n$, elements $c_1,...,c_n$ in $\S$, and
two elements $F$ and $G$ of $\Z[C_1,...,C_n]$, such
  that,
setting $f=F(c_1,\ldots,c_n)$ ($f\neq 0$) and $g=G(c_1,\ldots,c_n)$,
there exists a unit $u$ in $\Vac$  with:
$$f x^m = u g
$$
(in particular, $v(0)$ is well determined).

We read the previous formula as:
$$ m\, v_{\Kac}(x) = v_\K(g)-v_\K(f),
$$
or more simply as:
$$ m\, v(x) = v(g)-v(f).
$$
For $x,y\in \Kac$, we shall use the notation $x\preceq y$ for $v(x)\leq v(y)$
(i.e., $y=x=0\; \lor\; (x\neq 0\land y/x\in\Vac)$).

\mni
\textbf{Example.}
Let us explain the computations that are necessary
  to compare $3v(x_1)+2v(x_2)$ to $7v(x_3)$ when the valuations
are given by
$$f_1 x_1^{m_1} = u_1 g_1,\; f_2 x_2^{m_2} = u_2 g_2,\; f_3 x_3^{m_3}
= u_3 g_3\quad (g_1,g_2,g_3\neq0).
$$
We consider the LCM $m=m_1n_1=m_2n_2=m_3n_3$ of $m_1,m_2,m_3$.
We have
$$f_1^{n_1} x_1^{m} = u_1^{n_1} g_1^{n_1},\; f_2^{n_2} x_2^{m} =
u_2^{n_2} g_2^{n_2},\; f_3^{n_3} x_3^{m} = u_3^{n_3} g_3^{n_3}.
$$
So  $3v(x_1)+2v(x_2)\leq 7v(x_3)$ iff
$g_1^{3n_1}g_2^{2n_2}f_3^{7n_3}\preceq
f_1^{3n_1}f_2^{2n_2}g_3^{7n_3}$.

\ms The reader can easily verify that computations we shall run
in the value
group are always meaningful under our computability
assumptions on the ring $\S$.

In the same way, elements of the residue field will in general be
defined from elements of $\V$. So computations inside the residue
field are given by computations inside $\S$.

\medskip
We now give an outline of the paper.

In section \ref{sec Basic material} we give some basic tools used in 
the rest of
the paper. First we recall the Newton Polygon Algorithm and the Generalized
Tschirnhaus Transformation.
Then we insist on Generalized Taylor formulae, which
are formulae giving
$P(x)$ on a Thom interval as a sum of terms all having the same sign.
This feature
allows us to give a good description for $v(P(x))$ with the crucial Theorem
\ref{thValThom}.
This allows us to give a nice description for ``constructible"
subsets of the real line in the context of \rcvfs (cf.\  Theorem
\ref{thVcoLine}).

In section \ref{sec OVF} we settle three basic computational problems
  in the real closure of an \ovfz.
We solve the first problem by a simple trick
(subsection \ref{subsec.Solve1}).
The consequence is that
when we know how to compute in a given \ovfz, we know how to compute in its
real closure.
This can be seen as a not too difficult extension of basic algorithms
in \rcfsz.
Solving the second problem is possible by using our first algorithm,
but we prefer to develop another algorithm, similar to the Cohen-H\"ormander
algorithm for ordered fields.
We get in this way nice uniform results describing
precisely some generalizations of the complete tableau of signs
in the real closed
case (Theorems \ref{propCHV} and \ref{propMCHV}).

In section \ref{sec qela} we give parametrized
versions of previous algorithms
(Theorems  \ref{propCHV2} and~\ref{propMCHV2}),
and we apply these results to
quantifier elimination in \rcvfsz.
We consider the first order theory of
\rcvfs based on the language of ordered fields
$(0,1,+,-,\times,=,\leq)$ to which we add the
predicate $x\preceq y$.
So, all constants and variables represent elements in $\K$ (this corresponds to
our previously explained computability assumptions). We get the following
theorem.


\ms
\textbf{Theorem \ref{thCHV}}
  \textit{Let $\Phi(\underline{a},\underline{x}) $
be a quantifier free formula in the first order theory of
\rcvfsz. We view the $a_i$'s as parameters and the $x_j$'s as variables.}

\textit{Then one can give a quantifier free formula $\Psi(\underline{a})$
such that the two formulae
$\;\exists \underline{x}\;\Phi(\underline{a},\underline{x})\;$ and
$\;\Psi(\underline{a})\; $
are equivalent in the formal theory.
(The terms appearing in the formulae $\Phi$ and $\Psi$ are
$\Z$--polynomials in the parameters, and, in the case of
$\Phi$, also in the variables.)
  }

\ms
We think we have given here a rather simple proof of this fundamental, well
known
result (see e.g., \cite{CD}).

We also get the following abstract form of the previous theorem.

\ms
\textbf{Theorem \ref{thSperv}}
  \textit{Let us denote the real-valuative spectrum of a commutative ring $A$ by
$\Sperv A$. Then the canonical mapping from $\Sperv A[X]$ to $\Sperv A$
transforms any constructible subset into a constructible subset.
  }

\ms

In section \ref{secConstSubSpace} we apply the parametrized algorithms in order
to
study constructible subsets (in the meaning of \rcvfsz).
First, we get the analogue
of the Tarski-Seidenberg principle.


\ms
\textbf{Theorem \ref{thConsProj}}
\textit{Let $\KVP$ be an \ovsf of a \rcvf $\RVP$. Let $\pi$
be the canonical projection from $\R^{n+r}$ onto  $\R^{n}$.
Let $S\subseteq \R^{n+r}$ be any \vco set  defined over $\KVP$.
Assume that the sign test and the divisibility test are explicit
inside the ring generated by the coefficients of the polynomials
that appear in the definition of $S$.
Then a description of the projection $\pi(S)\subseteq\R^{n}$
can be computed in a uniform way by an algorithm that uses only
rational computations, sign tests and divisibility tests.}

\textit{In particular, the complexity of a description of $\pi(S)$
is explicitly bounded in terms of the complexity of a description of $S$.
  }

\ms

Finally we construct a kind of stratification for
\vco sets, that we call stratification \`a la Cohen Hormander because it is a
further development of the same notion for semialgebraic sets (cf. \cite{BCR}
chapter~9),
  and we finish the paper with the
following cell-decomposition theorem (for a precise definition of 
\qsl functions
see definition \ref{defQsl}).


\ms
\textbf{Theorem \ref{thCellDec1}} \textrm{(Cell decomposition theorem)}
\textit{
Let $\KVP$ be an \ovsf of a \rcvf $\RVP$.
Let $g_1,\ldots,g_s$ be nonzero polynomials in $\Kx$.
Consider a linear change of variables together with
  a family $(f_{i,j})_{i=1,\ldots,n;j=1,\ldots,\ell_i}$  that give a
stratification
for $(g_1,\ldots,g_s)$. Assume that this stratification is constructed \`a la
Cohen-H\"ormander. Consider any $k$-dimensional stratum $C_\varepsilon$
corresponding to this stratification.
Then there is a Nash isomorphism
$$h:(\R^+)^k\longrightarrow C_\varepsilon,\;\;\;
(t_1,\ldots,t_k)\longmapsto h(t_1,\ldots,t_k)$$
with the following property.}

\textit{If  $S$ is any \vco subset described from $g_1,\ldots,g_s$, then
  $S\, \cap\, C_\varepsilon$ is a finite union of cells $h(L_i)$, where each
$L_i$
can be defined as
$$\left\{(t_1,\ldots,t_k)\in(\R^+)^k\; :\;
\Land\nolimits_\ell a_\ell(\tau)=\alpha_\ell
\;\land\; \Land\nolimits_m  b_m(\tau)>\beta_m\right\},
$$
where $\tau=(\tau_1,\ldots,\tau_k)=(v(t_1),\ldots,v(t_k)$,
the $a_\ell$'s and $b_m$'s
are $\Z$-linear forms w.r.t.\  $\tau$,
and  $\alpha_\ell,\;\beta_m\in \GKd$.}

\textit{Moreover, each $\tau_i$ is a \qsl function in some
$v(F_j(x_1,\ldots,x_n))$'s (with $F_j$ explicitly computable in $ \Kx$).
  }


\section{Basic material} \label{sec Basic material}

\subsection{The Newton Polygon}
  \label{subsec NP}

Here we recall the well known \NPAz.

A \textit{multiset } is a set with (nonnegative) multiplicities, or
equivalently
a list defined up to permutation. E.g., the roots of a polynomial
$P(X)$ repeated according to multiplicities form a multiset in the
algebraic closure of the base field. We shall use the notation
$[x_1,\ldots,x_d]$ for the multiset corresponding to the list
$(x_1,\ldots,x_d)$. The \textit{cardinality} of a multiset is the length
of a corresponding list, i.e., the sum of multiplicities occurring in
the multiset.

\smallskip The \pgn of a polynomial $P(X)=\sum_{i=0,\ldots,d}
p_iX^i\in \K[X]$
(where $p_d\not= 0$) is obtained from the list of pairs in
$\N\times (\GKi)$
$$((0,v(p_0)),(1,v(p_1)),\ldots,(d,v(p_d))).
$$
The \pgn is ``the bottom convex hull" of this list.
It can be formally defined as the extracted list
$\,((0,v(p_0)),\ldots,(d,v(p_d)))\,$ verifying:
two pairs $(i,v(p_i))$ and $(j,v(p_j))$ are two consecutive vertices
of the \pgn iff:

\centerline {if  $0\leq k< i~$ then
$~(v(p_j)-v(p_i))/(j-i)~>~(v(p_i)-v(p_k)/(i-k))$}

\centerline {if $ i<k< j~$ then
$~(v(p_k)-v(p_i))/(k-i)~\ge~(v(p_j)-v(p_i))/(j-i)$}

\centerline {if $j<k\le d~$ then
$~(v(p_k)-v(p_j))/(k-j)~>~(v(p_j)-v(p_i))/(j-i)$}

\noi It is easily shown that if  $(i,v(p_i))$ and $(j,v(p_j))$
are two consecutive vertices in the \pgn of the polynomial $P$,
then the zeroes of $P$
in $\Kac$ whose valuation in   $\GKd$ equals
$~(v(p_i)-v(p_j))/(j-i)~$ form a multiset with cardinality $j-i$.

\ms
\textbf{Computational problem 0}
\textit{
\emph{(Multiset of valuations of roots of polynomials)}  \\
\Inp Let $P\in \K [X]$ be a polynomial over a valued field
$(\K ,{\V})$. \\
\Output The multiset $[v(x_1),\ldots,v(x_n)]$ where $[x_1,\ldots,x_n]$
is the multiset of roots of $P$ in $\Kac$. }

\ms
\textbf{\NPAz}\\
The number $n_\infty$ of roots equal to 0 (i.e., with infinite
valuation) is read on $P$. Let $P_0:=P/X^{n_\infty}$. Compute the
Newton polygon of $P_0$, compute the slopes and output the answer.
\eop

\subsection{Generalized Tschirnhaus transformation}
\label{subsec Tschirnhaus}

We recall here the well known (generalized) Tschirnhaus transformation,
which we will use freely in our computations.

Let $\K$ be a field, $(P_j)_{j=1,\ldots,m}$ be a family of monic
polynomials in $\K [X]$, and
$$P_j(X)=(X-x_{j,1})\times \cdots\times (X-x_{j,d_j})
$$
their decompositions in $\Kac$. Let  $Q(Y_1,\ldots,Y_m)$ be a
polynomial in  $\K[Y_1,\ldots,Y_m]$.
Then the polynomial
$$T_Q(Z)=(Z-Q(x_{1,1},\ldots,x_{m,1}))\times \cdots\times
(Z-Q(x_{1,d_1},\ldots,x_{m,d_m}))
$$
is the characteristic polynomial of $A_Q$ where $A_Q$ is the matrix of
the multiplication by $Q(y_1,\ldots,y_m)$ inside
the $d$-dimensional $\K$-algebra
$$\K[y_1,\ldots,y_m]:=\K [Y_1,\ldots,Y_m]/\gen{P_1(Y_1),\ldots,P_m(Y_m)}
$$
($d=d_1\cdots d_m$, and $y_i$ is the class of $Y_i$ modulo
$\gen{P_1,\ldots,P_m}$).

Now let $R\in \K [Y_1,\ldots,Y_m]$ with $R(\underline{x})\not= 0$ for
all
$m$-tuples $\underline{x}=(x_{1,r_1},\ldots,\alb x_{m,r_m})$.
So $A_R$ is an invertible matrix. Let $F=Q/R$,  then the polynomial
$$
T_F(Z)=(Z-F(x_{1,1},\ldots,x_{m,1}))\times \cdots\times
(Z-F(x_{1,d_1},\ldots,x_{m,d_m}))
$$
is the characteristic polynomial of $A_Q(A_R)^{-1}$.

\subsection{Generalized Taylor Formulas}
\label{subsecGTF}

~

\ms\textbf{Using the usual Taylor formula for computing
valuations in $\GKd$.}

For $P\in \K[X]$ we denote $P^{[k]}=P^{(k)}/k!$, where $P^{(k)}$
is the $k$-th derivative of $P$. Let $t=x-a$, and assume $\deg(P)=d$,
the usual Taylor formula at the point $a$ is
$$ P(x) = P(a) + P^{[1]}(a)t + P^{[2]}(a)t^2 + \cdots
+P^{[d-1]}(a)t^{d-1} + P^{[d]}t^d.
$$
Now assume that $P^{[d]}>0$. Let $a_0$ be the greatest
real root of the product
$PP^{[1]}\cdots P^{[d-1]}$. If $a\ge a_0$ we see that all
$P^{[k]}(a)$ are $\ge0$ and we get the following expression for
the valuation  $v(P(x))$ when $x>a$
$$ v(P(x))= \min(\nu_0,\nu_1+\tau,\nu_2+2\tau,\ldots,\nu_d+d\tau)
$$
where $\tau=v(t)$ and $\nu_j=v(P^{[j]}(a))$ (some $\nu_j$'s may be
infinite). So, w.r.t.\  the variable
  $\tau$ the valuation of $P(x)$ in $\GKd$ is piecewise linear and
increasing.
Note that $\tau$ decreases from $+\infty$
  to $-\infty$ when $t$ increases from $0$ to $+\infty$.

In the following paragraphs, we see that generalized
Taylor formulae allow us to give
a similar description of the valuation  $v(P(x))$
when $x$ is inside a Thom interval.

\ms\textbf{What are generalized Taylor formulae?}

A fundamental example of algebraic evidence for a sign is given
by  generalized Taylor formulae,
which make explicit some consequences of Thom's
lemma in terms of algebraic identities.

Thom's lemma implies that the set of points
where a real polynomial and its
successive derivatives have fixed signs is an interval.
An easy proof, by
induction on the degree of the polynomial, is based on the
mean value theorem.
We can  translate this geometric fact under the
form of algebraic
  identities called {\em   Generalized Taylor Formulas} (GTF for
short).

\smallskip Let us see an example where $\deg(P) \le 4 $.

\begin{example}
\label{exa2}
Consider the general polynomial of degree $4$
  $$P(X) = c_0 X^4 + c_1 X^3 + c_2 X^2 + c_3 X + c_4,$$
consider the following system of sign conditions for the polynomial
$P$
and its successive derivatives with respect to the variable $X$:
  $$H(X): P(X) > 0,\; P^{[1]}(X) < 0,\; P^{[2]}(X) < 0,\;
P^{[3]}(X) < 0,\; P^{[4]} > 0.$$
Consider also the system of sign conditions
  obtained by relaxing all the inequalities,
except one of them, e.g., the last one:
  $$H'(X): P(X) \ge 0,\; P^{[1]}(X) \le 0, P^{[2]}(X) \le 0,\;
P^{[3]}(X) \le 0,\; P^{[4]} > 0.$$
Thom's lemma implies that:
$$[\, H'(a), \;H'(b),\; a < x < b\,] \;\Longrightarrow\; H(x). $$
Put $e_1 = x - a$, $e_2 = b - x.$
Consider the following algebraic identity in
$\Z[c_0,\ldots,c_4,\alb a,b,x]$
$$\begin{array}{rcl}
P(x)& = &P(b)-e_2\,P^{[1]}(a) -(2 e_1e_2 + {e_2}^2)\, P^{[2]}(a)
\\
     &   & - (3{e_1}^2 e_2 + 3e_1 {e_2}^2 + {e_2}^3)\,P^{[3]}(b)\\
&   & + (8{e_1}^3e_2+12{e_1}^2{e_2}^2+12e_1{e_2}^3+3{e_2}^4)\,P^{[4]}.
\end{array}$$
This gives clearly an evidence that, when $P\in\K[X]$ where $\K$ is an
ordered field,
$$[\,H'(a), \;H'(b), \;a < x < b \,] \; \Longrightarrow \; P(x)>0.$$
\end{example}

One can find more information about mixed and generalized Taylor
formulae in \cite{Lom92,War,War2}.
The important thing is that for any
fixed degree, and any combination of signs for $P$ and its derivatives
(which are assumed to be fixed on the interval), there exists a
corresponding GTF.
We state a general result giving the existence of
GTF's.
\begin{proposition} \emph{(see \cite{War})}
\label{propGTF}
Let  $P$ be a polynomial of degree $d$ in $\K[X]$ and $a,b,x$ three
variables.
Let  $e_1=x-a$, $e_2=b-x$.
Let $\varepsilon=(\epsilon_1,\ldots ,\epsilon_d)$ be any sequence in
$\left\{-1,+1\right\}$. Let $\epsilon_0=1$. Then there exists an
algebraic identity
$$ P(x)=P(a_0)+\sum_{k=1}^{d-1}
{\epsilon_k H_{k,\varepsilon}(e_1,e_2)P^{[k]}(a_k)}+
\epsilon_d H_{d,\varepsilon}(e_1,e_2)P^{[d]}
$$
where each polynomial $H_{k,\varepsilon}$ is homogeneous of degree $k$
with nonnegative integer coefficients, $a_k=a$ if $\epsilon_k
\epsilon_{k+1}=1$, and
$a_k=b$ if $\epsilon_k \epsilon_{k+1}=-1$.

Moreover, if $\epsilon_1=1$, then $e_1$ divides all the
$H_{k,\varepsilon}$'s,
and the coefficient of $e_1^k$ in $H_{k,\varepsilon}$ is
nonzero.
In a similar way  if $\epsilon_1=-1$, then $e_2$ divides all the
$H_{k,\varepsilon}$'s, and the coefficient of $e_2^k$ in
$H_{k,\varepsilon}$ is nonzero.
\end{proposition}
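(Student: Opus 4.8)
\emph{Sketch of proof.}
The plan is to induct on $d=\deg P$. The base case $d=1$ is immediate: one takes $P(x)=P(a)+e_1P^{[1]}$ when $\epsilon_1=1$ (so $a_0=a$) and $P(x)=P(b)-e_2P^{[1]}$ when $\epsilon_1=-1$ (so $a_0=b$). For the inductive step I would manufacture a GTF for $P$ out of one for its derivative. The key algebraic remark is that $(P^{[1]})^{[k]}=\binom{k+1}{k}P^{[k+1]}=(k+1)P^{[k+1]}$, so that the signs carried on the relevant Thom interval by $P^{[1]}$ and its successive derivatives are $\epsilon_1,\epsilon_2,\dots,\epsilon_d$. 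I would apply the induction hypothesis to the polynomial $\epsilon_1P^{[1]}$ --- of degree $d-1$ and leading sign $\epsilon_1^2=+1$ --- with the sign sequence $\varepsilon'=(\epsilon_1\epsilon_2,\dots,\epsilon_1\epsilon_d)$, writing $\widehat H_{k,\varepsilon'}$ for the resulting polynomials. One checks at once that the base point attached to $\widehat H_{k,\varepsilon'}$ is the point $a_{k+1}$ occurring for $P$, because the rule ``$\epsilon'_j\epsilon'_{j+1}=1$'' for $\epsilon_1P^{[1]}$ is precisely the rule ``$\epsilon_{j+1}\epsilon_{j+2}=1$'' for $P$. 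Multiplying the identity so obtained by $\epsilon_1$ and inserting $(P^{[1]})^{[k]}=(k+1)P^{[k+1]}$ gives
\[
P^{[1]}(s)=P^{[1]}(a_1)+\epsilon_1\sum_{k=1}^{d-1}\epsilon_{k+1}(k+1)\,\widehat H_{k,\varepsilon'}(s-a,\,b-s)\,P^{[k+1]}(a_{k+1}),
\]
where for $k=d-1$ one reads $P^{[d]}(a_d)=P^{[d]}$.

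Next I would integrate: since $\Q\subseteq\K$, every polynomial has a unique formal antiderivative, so one may write $P(x)=P(a)+\int_a^xP^{[1]}(s)\,ds$ when $\epsilon_1=1$ (then $a_0=a$) and $P(x)=P(b)-\int_x^bP^{[1]}(s)\,ds$ when $\epsilon_1=-1$ (then $a_0=b$). Substituting the displayed expression and integrating termwise, the constant $P^{[1]}(a_1)$ contributes $\epsilon_1 e_1\,P^{[1]}(a_1)$, resp.\ $\epsilon_1 e_2\,P^{[1]}(a_1)$, so $H_{1,\varepsilon}=e_1$, resp.\ $e_2$; and, after the re-indexing $k\mapsto k-1$, the term $P^{[k]}(a_k)$ ($2\le k\le d$) comes out with a factor $\epsilon_k$ multiplying $H_{k,\varepsilon}=k\int_a^x\widehat H_{k-1,\varepsilon'}(s-a,b-s)\,ds$ (when $\epsilon_1=1$) or $H_{k,\varepsilon}=k\int_x^b\widehat H_{k-1,\varepsilon'}(s-a,b-s)\,ds$ (when $\epsilon_1=-1$), the $\epsilon_1$ coming out of the sum cancelling the one in front. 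This is the asserted identity; there remain the three properties of the $H_{k,\varepsilon}$.

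Homogeneity of degree $k$ is clear, since $\widehat H_{k-1,\varepsilon'}(s-a,b-s)$ is homogeneous of degree $k-1$ in two arguments whose sum is $b-a=e_1+e_2$, and the substitution $u=s-a$ turns the integral into one of $u^i\bigl((e_1+e_2)-u\bigr)^j$ with $i+j=k-1$ over $[0,e_1]$ or $[e_1,e_1+e_2]$, homogeneous of degree $k$ in $(e_1,e_2)$. For nonnegativity of the coefficients I would use the closed form (repeated integration by parts)
\[
\int_0^{e_1}u^i\bigl((e_1+e_2)-u\bigr)^j\,du=\sum_{l=0}^{j}\frac{j!/(j-l)!}{(i+1)(i+2)\cdots(i+l+1)}\;e_1^{\,i+l+1}\,e_2^{\,j-l},
\]
all of whose coefficients are $>0$; since $\int_{e_1}^{e_1+e_2}u^i((e_1+e_2)-u)^j\,du=\int_0^{e_2}v^j((e_1+e_2)-v)^i\,dv$ (set $v=(e_1+e_2)-u$), the same formula with $e_1\leftrightarrow e_2$ covers the case $\epsilon_1=-1$; combined with the inductive nonnegativity of the coefficients of $\widehat H_{k-1,\varepsilon'}$ this yields nonnegativity for $H_{k,\varepsilon}$. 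Integrality --- not visible termwise above --- I would get from uniqueness: the identity must hold for every $P$ of degree $\le d$, and equating the coefficients of $P$ amounts to inverting the unipotent upper-triangular integer matrix that expresses $\bigl(P^{[0]}(a_0),P^{[1]}(a_1),\dots,P^{[d-1]}(a_{d-1}),P^{[d]}\bigr)$ in terms of the coefficients of $P$; hence the $H_{k,\varepsilon}$, already known to be polynomials in $e_1,e_2$, lie in $\Z[e_1,e_2]$. Finally the ``moreover'' clause drops out of the construction: for $\epsilon_1=1$ the antiderivative $\int_a^x\widehat H_{k-1,\varepsilon'}(s-a,b-s)\,ds$ vanishes at $x=a$, so $e_1\mid H_{k,\varepsilon}$, and putting $e_2=0$ (i.e.\ $b=x$) and rescaling $s=a+t(x-a)$ shows that the coefficient of $e_1^k$ in $H_{k,\varepsilon}$ equals $k\int_0^1\widehat H_{k-1,\varepsilon'}(t,1-t)\,dt>0$ --- the integrand being a nonnegative, not identically zero form on $[0,1]$; the case $\epsilon_1=-1$ is the mirror image under $e_1\leftrightarrow e_2$, $a\leftrightarrow b$.

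I expect the only genuine pitfall to be the bookkeeping of signs and base points in passing from the GTF for $\epsilon_1P^{[1]}$ to the one for $P$ --- in particular tracking the stray factor $\epsilon_1$ and the shift of indices --- rather than any delicate estimate; once the explicit integral above is available, the positivity, integrality and divisibility verifications are essentially mechanical.
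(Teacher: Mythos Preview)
The paper does not actually prove this proposition: it is stated with the attribution ``(see \cite{War})'' and no argument is given, the authors contenting themselves with Example~\ref{exa2}, the list of degree-$3$ GTF's, and Remark~\ref{rempropGTF}. So there is no ``paper's proof'' to compare against; your sketch supplies what the paper outsources.

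Your inductive scheme --- apply the hypothesis to $\epsilon_1P^{[1]}$ with the shifted sign sequence $\varepsilon'=(\epsilon_1\epsilon_2,\dots,\epsilon_1\epsilon_d)$, then antidifferentiate from $a$ or from $b$ according to $\epsilon_1$ --- is sound. The bookkeeping is right: the base point $a'_k$ for $\epsilon_1P^{[1]}$ is governed by $\epsilon'_k\epsilon'_{k+1}=\epsilon_{k+1}\epsilon_{k+2}$, hence equals $a_{k+1}$, and the three factors of $\epsilon_1$ (from $Q$, from $\epsilon'_k$, and from $Q^{[k]}$) collapse as you say. The homogeneity and nonnegativity arguments via the explicit beta-type integrals are fine, and your positivity of the $e_1^k$-coefficient via $\widehat H_{k-1}(t,1-t)$ not vanishing identically on $[0,1]$ is correct since $\widehat H_{k-1}(1/2,1/2)>0$ for any nonzero form with nonnegative coefficients. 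The integrality argument is also valid: writing $\Z[a,b,x]=\Z[a,e_1,e_2]$ (with $a,e_1,e_2$ algebraically independent), one has $\Q[e_1,e_2]\cap\Z[a,e_1,e_2]=\Z[e_1,e_2]$, which is exactly what you need once uniqueness forces $H_{k,\varepsilon}\in\Z[a,b,x]$.
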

\begin{remark}
\label{rempropGTF}
Let  $P$ be a polynomial of degree $d$ in $K[X]$ and
let  $a<b\in\Krc$ be such that
$P^{[k]}(a)P^{[k]}(b)\ge 0$  for $k=0,\ldots,d $.
This gives a system of signs
$(\sigma_0,\sigma_1,\ldots ,\sigma_d)$
($\sigma_i=\pm1$) ($\sigma_k$ is the sign of $P^{[k]}(x)$  on the open interval
$\,]a,b[\,$). Let $\epsilon_i=\sigma_0\sigma_i$,
$\varepsilon=(\epsilon_1,\ldots ,\epsilon_d)$.
Then the  corresponding GTF gives an algebraic certificate for the
fact that $\sign(P(x))=\sigma_0$ when $a<x<b$.
\end{remark}

We now give four GTF's in degree 3, those beginning by $P(a)+
e_1P^{[1]}\cdots $.
Each formula is given also with $e_1$ in factor in the second part.
$$\begin{array}{rl}
P(x)  = &
P(a) + e_1P^{[1]}(a) + e_1^2P^{[2]}(a)+e_1^3P^{[3]}\\
  = &
P(a) + e_1 \left(P^{[1]}(a) + e_1P^{[2]}(a)+e_1^2P^{[3]} \right) \\
&\\
  = &
P(a) + e_1P^{[1]}(a) + e_1^2P^{[2]}(b)
-\left(2e_1^3+e_1^2e_2\right)P^{[3]}\\
  = &
P(a) + e_1 \left( P^{[1]}(a) + e_1P^{[2]}(b)
-\left(2e_1^2+e_1e_2\right)P^{[3]}\right) \\
&\\
    = &
P(a) + e_1P^{[1]}(b) - \left(e_1^2 + 2e_1e_2\right)P^{[2]}(a)
- \left(2e_1^3 + 6e_1^2e_2+3e_1e_2^2\right)P^{[3]}\\
  = &
P(a) + e_1 \left( P^{[1]}(b) - \left(e_1 + 2e_2\right)P^{[2]}(a)
- \left(2e_1^2 + 6e_1e_2+3e_2^2\right)P^{[3]}\right) \\
&\\
    = &
P(a) + e_1P^{[1]}(b) - \left(e_1^2+2e_1e_2\right) P^{[2]}(b)
+ \left(e_1^3+3e_1^2e_2+3e_1e_2^2\right) P^{[3]}\\
  = &
P(a) + e_1 \left( P^{[1]}(b) - \left(e_1+2e_2\right) P^{[2]}(b)
+ \left(e_1^2+3e_1e_2+3e_2^2\right) P^{[3]}\right).
\end{array}$$
  There are also four other GTF's beginning by $P(b)-e_2.P^{[1]}\cdots
$. They can be obtained from the first ones by  swapping $a$ and $b$,
and replacing $e_1$ and $e_2$ by $-e_2$ and $-e_1$
$$\begin{array}{rl}
P(x)  = &
P(b) - e_2P^{[1]}(b) + e_2^2P^{[2]}(b)-e_2^3P^{[3]}\\
  = &
P(b) - e_2 \left(P^{[1]}(b) - e_2P^{[2]}(b)+e_2^2P^{[3]} \right) \\
&\\
  = &
P(b) - e_2P^{[1]}(b) + e_2^2P^{[2]}(a)
+\left(2e_2^3+e_2^2e_1\right)P^{[3]}\\
  = &
P(b) - e_2 \left( P^{[1]}(b) - e_2P^{[2]}(a)
-\left(2e_2^2+e_2e_1\right) P^{[3]}\right) \\
&\\
    = &
P(b) - e_2P^{[1]}(a) - \left(e_2^2 + 2e_2e_1\right) P^{[2]}(b)
+ \left(2e_2^3 + 6e_2^2e_1+3e_2e_1^2\right) P^{[3]}\\
  = &
P(b) - e_2 \left( P^{[1]}(a) + \left(e_2 + 2e_1\right) P^{[2]}(b)
- \left(2e_2^2 + 6e_2e_1+3e_1^2\right) P^{[3]}\right) \\
&\\
    = &
P(b) - e_2P^{[1]}(a) - \left(e_2^2+2e_2e_1\right) P^{[2]}(a)
+ \left(e_2^3-3e_2^2e_1+3e_2e_1^2\right) P^{[3]}\\
  = &
P(b) - e_2 \left( P^{[1]}(a) + \left(e_2+2e_1\right)P^{[2]}(a)
+ \left(e_2^2+3e_2e_1+3e_1^2\right) P^{[3]}\right).
\end{array}$$


\ms\textbf{Using generalized Taylor formulae for computing
the variations of the valuation $v(P(x))$.}

Now let us see in the case of an \ovf how these
formulae can be used
in order to describe the variations of $v(P(x))$
when $x$ is on the real line $\Krc$.

\begin{example}
\label{exa3}
  Let $a,b\in \Krc$ and assume that the signs of the derivatives
  of a polynomial $P$ of degree $4$ are the same in $a$ and $b$,
as in  Example \ref{exa2}.
If $x\in [a,b]$ let  $x=a+t_1(b-a)$, $e=b-a$, $e_1=t_1e$, $e_2=t_2e$
(so $t_2=1-t_1$), $\delta=v(e)$, $\tau_1=v(t_1)$, $\tau_2=v(t_2)$,
$\nu_0=v(P(b))$, $\nu_1=v(P^{[1]}(a))$, $\nu_2=v(P^{[2]}(a))$,
$\nu_3=v(P^{[3]}(b))$, $\nu_4=v(P^{[4]})$. We rewrite the GTF as
$$\begin{array}{rcl}
P(x)& =   & P(b) - e\,{t_2}\,P^{[1]}(a)-e^2(2t_1{t_2}+{t_2}^2)\,P^{[2]}(a)\\
&& \qquad  -e^3( 3{t_1}^2t_2 + 3t_1 {t_2}^2 +{t_2}^3)\,P^{[3]}(b)   \\
&&\qquad
+ e^4(8{t_1}^3t_2+ 12{t_1}^2{t_2}^2+
12t_1{t_2}^3+ 3{t_2}^4)\,P^{[4]}.
\end{array}$$
In the above GTF, since all terms of the sum are $\ge 0$, the valuation of
the sum is the minimum of valuations of the terms, so we get:
\begin{itemize}
\item [$(1)$] If
$t_1$ and $t_2$ are units, then $\tau_1=\tau_2=0$ and
$ v(P(x))$ is constant equal to
$$ v(P(x))= \min
(\nu_0,\,\nu_1+\delta,\,\nu_2+2\delta,\,\nu_3+3\delta,\,
\nu_4+4\delta).
$$
\item [$(2)$] If $t_1$ is infinitely close to $0$, then $\tau_1>0$
(decreasing as $t_1$ increases), $\tau_2=0$, and $ v(P(x))$ is a priori
increasing ``piecewise linearly  w.r.t.\  $\tau_1$", but in our case
constant
$$ v(P(x))= \min (\nu_0,\, \nu_1+\delta,\, \nu_2+2\delta ,\,
\nu_3+3\delta,\, \nu_4+4\delta).
$$
\item [$(3)$] If $t_1$ is infinitely close to $1$, then $\tau_1=0$,
$\tau_2>0$ (increasing as $t_1$ increases), and $ v(P(x))$ is
increasing ``piecewise linearly w.r.t.\  $\tau_2$"
$$ v(P(x))= \min (\nu_0,\, \nu_1+\delta+\tau_2,\, \nu_2+2\delta+\tau_2
,
\, \nu_3+3\delta +\tau_2,\, \nu_4+4\delta+\tau_2).
$$
In fact here we see that this formula is true in
the three cases and
that only two slopes (w.r.t.\  the variable $\tau_2$)
can appear since
$$ v(P(x))= \min (\nu_0,\,\min (\nu_1+\delta,\, \nu_2+2\delta, \,
\nu_3+3\delta,\, \nu_4+4\delta)+ \tau_2
).
$$
\end{itemize}
\end{example}

\begin{example}
\label{exa4}
In a similar way let us see what is given by the second GTF in degree
$3$
$$ P(x)= P(a) + e_1\,P^{[1]}(a) + e_1^2\,P^{[2]}(b)
-\left(2e_1^3+e_1^2e_2\right)\,P^{[3]}.$$
We assume $P(a)\geq 0$, $P^{[1]}(a)\geq 0$, $P^{[2]}(b)\geq 0$,
$P^{[3]}<0$,
$x=a+t_1(b-a)$, $e=b-a$, $e_1=t_1e$, $e_2=t_2e$
($t_2=1-t_1$), $\delta=v(e)$, $\tau_1=v(t_1)$, $\tau_2=v(t_2)$,
$\nu_0=v(P(a))$, $\nu_1=v(P^{[1]}(a))$, $\nu_2=v(P^{[2]}(b))$, 
$\nu_3=v(P^{[3]})$, and we get
$$ P(x)= P(a) + e\,t_1\,P^{[1]}(a) + e^2t_1^2\,P^{[2]}(b)
-e^3\left(t_1^3+t_1^2t_2\right)\,P^{[3]}.$$
\begin{itemize}
\item [$(1)$] If
$t_1$ and $t_2$ are units, then $\tau_1=\tau_2=0$ and $ v(P(x))$ is
constant equal to
$$ v(P(x))= \min
(\nu_0,\,\nu_1+\delta,\,\nu_2+2\delta,\,\nu_3+3\delta).
$$
\item [$(2)$] If $t_1$ is infinitely close to $1$, then $\tau_1=0$,
$\tau_2>0$ (increasing as $t_1$ increases), and $ v(P(x))$ is
increasing ``piecewise linearly w.r.t.\  $\tau_2$", but in our case
constant
$$ v(P(x))= \min (\nu_0,\, \nu_1+\delta,\, \nu_2+2\delta,
\, \nu_3+3\delta).
$$
\item [$(3)$] If $t_1$ is infinitely close to $0$, then $\tau_1>0$
(decreasing as $t_1$ increases), $\tau_2=0$, and $ v(P(x))$ is
increasing ``piecewise linearly  w.r.t.\  $\tau_1$",
$$ v(P(x))= \min  (\nu_0,\,\nu_1+\delta+\tau_1,\,\nu_2+2\delta+2\tau_1
,\,\nu_3+3\delta+2\tau_1).
$$
In fact here we see that this formula is true in the three cases and
that only three slopes (w.r.t.\  the variable $\tau_1$) can appear since
$$ v(P(x))= \min (\nu_0,\, (\nu_1+\delta)+\tau_1,\,
\min(\nu_2+2\delta,\,\nu_3+3\delta)+2\tau_1).
$$
\end{itemize}
\end{example}

What we have seen on our two
Examples \ref{exa3} and \ref{exa4} is a
general result, that we immediately get as
a corollary of Proposition \ref{propGTF}.
\begin{theorem}
\label{thValThom}
Let  $P$ be a polynomial of degree $d$ in $K[X]$ and $a<b\in\Krc$ such
that $P^{[k]}(a)P^{[k]}(b)\ge 0$  for $k=0,\ldots,d $.
Let $(\sigma_0,\sigma_1,\ldots ,\sigma_d)$ be the signs of
$P,P^{[1]},\ldots,P^{[d]}$ in the interval $\,]a,b[\,$
($\sigma_i=\pm1$). Let $\epsilon_i=\sigma_0\sigma_i$,
$\varepsilon=(\epsilon_1,\ldots ,\epsilon_d)$.
Let us consider the corresponding  GTF as in Proposition \ref{propGTF}, and let
us follow the notation there.
Let  $\nu_k=v(P^{[k]}(a_k))$ for $k=0,\ldots,d$. Recall
that
$a_k=a$ if $\epsilon_k \epsilon_{k+1}=1$, and
$a_k=b$ if $\epsilon_k \epsilon_{k+1}=-1$.
Note also that $\nu_k$ may be infinite if $k<d$.
If $x\in [a,b]$ let  $x=a+t_1(b-a)$, $e=b-a$,  $t_2=1-t_1$,
$\delta=v(e)$, $\tau_1=v(t_1)$, $\tau_2=v(t_2)$.

Then for $x\in [a,b]$ the valuation $ v(P(x))$  is monotonic w.r.t.\  $t_1$ and
more
precisely can be described in the following way.
\begin{itemize}
\item [$(a)$]
\begin{itemize}
\item  If $\epsilon_1=1$ we can extract from the GTF integers
$k_2,\ldots,k_d$ such that $1\leq k_j\leq j$  and
$$ v(P(x))= \min (\nu_0,\,
\nu_1+\delta+\tau_1,\,\nu_2+2\delta+k_2\tau_1,\ldots,
\nu_d+d\delta+k_d\tau_1).
$$
\item  If  $\epsilon_1=-1$ we can extract from the GTF integers
$k_2,\ldots,k_d$ such that $1\leq k_j\leq j$  and
$$ v(P(x))= \min (\nu_0,\,
\nu_1+\delta+\tau_2,\,\nu_2+2\delta+k_2\tau_2,\ldots,
\nu_d+d\delta+k_d\tau_2).
$$
\end{itemize}
\item [$(b)$]
So in any case the valuation $ v(P(x))$  is
\begin{itemize}
\item  either constant (if $v(P(a))=v(P(b))$),
\item  or increasing piecewise linearly w.r.t.\
$\tau_1=v({x-a \over b-a})$
(if $v(P(a))>v(P(b))$),
\item  or increasing piecewise linearly w.r.t.\
$\tau_2=v({b-x \over b-a})$,
(if $v(P(a))<v(P(b))$).
\end{itemize}
\item [$(c)$] Introducing
$$\tau=\tau_1-\tau_2=v\left({t_1\over1-t_1}\right)$$
we also get: $\tau_1=\max(\tau,0)=\tau^+$,
$\tau_2=\max(-\tau,0)=\tau^-$,
and the value $v(P(x))$  is monotone and piecewise linear
w.r.t.\  $\tau$. More precisely,  we can extract from the GTF integers
$k_2,\ldots,k_d$ such that $1\leq k_j\leq j$  and
$$ v(P(x))= \min (\nu_0,\,
\nu_1+\delta+\tau',\,\nu_2+2\delta+k_2\tau',
\ldots, \nu_d+d\delta+k_d\tau')
$$
where $\tau'=\max(\epsilon_1\tau,0)$.
\end{itemize}
\end{theorem}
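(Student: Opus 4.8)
The plan is to read the statement off directly from the GTF supplied by Proposition \ref{propGTF}, using the fact — recorded in Remark \ref{rempropGTF} — that this GTF is precisely an algebraic certificate that \emph{every} one of its terms carries the sign $\sigma_0$ on $\,]a,b[\,$. First I would fix the GTF attached to $\varepsilon=(\epsilon_1,\ldots,\epsilon_d)$,
$$P(x)=P(a_0)+\sum_{k=1}^{d-1}\epsilon_k H_{k,\varepsilon}(e_1,e_2)\,P^{[k]}(a_k)+\epsilon_d H_{d,\varepsilon}(e_1,e_2)\,P^{[d]},$$
with $e_1=x-a$, $e_2=b-x$, both $\ge 0$ for $x\in[a,b]$. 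Since each $H_{k,\varepsilon}$ has nonnegative coefficients we get $H_{k,\varepsilon}(e_1,e_2)\ge 0$; by Thom's lemma $P^{[k]}(a_k)$ has sign $\sigma_k$ or is $0$, so (using $\epsilon_k=\sigma_0\sigma_k$) the $k$-th summand has sign $\sigma_0$ or vanishes, and likewise $P(a_0)$. As $v$ is additive in the sense $v(y+y')=\min(v(y),v(y'))$ on elements of equal sign, this yields
$$v(P(x))=\min\bigl(\nu_0,\ v(H_{1,\varepsilon}(e_1,e_2))+\nu_1,\ \ldots,\ v(H_{d,\varepsilon}(e_1,e_2))+\nu_d\bigr),$$
valid for all $x\in[a,b]$ (with the usual conventions at the endpoints, where one of $\tau_1,\tau_2$ equals $+\infty$).

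Next I would substitute $e_1=t_1e$, $e_2=t_2e$ and use homogeneity of $H_{k,\varepsilon}$: $H_{k,\varepsilon}(e_1,e_2)=e^k H_{k,\varepsilon}(t_1,t_2)$, so $v(H_{k,\varepsilon}(e_1,e_2))=k\delta+v(H_{k,\varepsilon}(t_1,t_2))$. Writing $H_{k,\varepsilon}(t_1,t_2)=\sum_i c^{(k)}_i t_1^i t_2^{k-i}$ with $c^{(k)}_i\ge 0$, and using $t_1,t_2\ge 0$, the valuation of this sum is the minimum of the valuations of its terms, i.e. $v(H_{k,\varepsilon}(t_1,t_2))=\min\{\,i\tau_1+(k-i)\tau_2:c^{(k)}_i\ne 0\,\}$. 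The one elementary fact that makes everything collapse is
$$\min(\tau_1,\tau_2)=v(t_1+t_2)=v(1)=0$$
(as $t_1,t_2\ge 0$ and $t_1+t_2=1$); this simultaneously gives the identities $\tau_1=\tau^+$, $\tau_2=\tau^-$ of part $(c)$.

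Now assume $\epsilon_1=1$ (the case $\epsilon_1=-1$ follows by the symmetry $a\leftrightarrow b$, $t_1\leftrightarrow t_2$, $\tau_1\leftrightarrow\tau_2$). By the last assertion of Proposition \ref{propGTF}, $e_1$ divides $H_{k,\varepsilon}$, so $c^{(k)}_0=0$, while $c^{(k)}_k\ne 0$; put $k_k=\min\{\,i:c^{(k)}_i\ne 0\,\}$, so $1\le k_k\le k$, with $k_1=1$ since $H_{1,\varepsilon}$ is linear. If $\tau_2=0$ then $\min_i(i\tau_1+(k-i)\tau_2)=k_k\tau_1$ (as $\tau_1\ge 0$, the minimum sits at the smallest index occurring); if $\tau_1=0$ then the minimum is attained at $i=k$ and equals $0=k_k\tau_1$. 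Hence in every case $v(H_{k,\varepsilon}(t_1,t_2))=k_k\tau_1$, and substituting into the displayed expression for $v(P(x))$ gives exactly formula $(a)$; part $(c)$ is the same formula written with $\tau'=\tau_1=\max(\epsilon_1\tau,0)$, and monotonicity in $\tau$ follows since $\tau'$ is monotone in $\tau$ and $v(P(x))$ is nondecreasing in $\tau'$. Finally, for $(b)$: here $a_0=a$, so $\nu_0=v(P(a))$, and specialising to $x=b$ (that is, $\tau_1=0$) gives $v(P(b))=\min(\nu_0,\nu_1+\delta,\ldots,\nu_d+d\delta)\le\nu_0$; thus $\tau_1\mapsto v(P(x))$ is piecewise linear nondecreasing on $[0,+\infty]$, running from $v(P(b))$ up to $v(P(a))$, hence constant exactly when $v(P(a))=v(P(b))$ and genuinely increasing (in $\tau_1$) otherwise — and symmetrically in $\tau_2$ when $\epsilon_1=-1$.

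The only point that I expect to need a little care is checking that the \emph{single} exponent $k_k$ works simultaneously in the two regimes $\tau_2=0$ and $\tau_1=0$ (and at $x=a$ or $x=b$, where $\tau_1$ or $\tau_2$ is $+\infty$); everything else is bookkeeping layered on Proposition \ref{propGTF} and Remark \ref{rempropGTF}.
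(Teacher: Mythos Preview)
Your argument is correct and is exactly the approach the paper intends: the text immediately preceding the theorem says that ``what we have seen on our two Examples \ref{exa3} and \ref{exa4} is a general result, that we immediately get as a corollary of Proposition \ref{propGTF}'', and gives no further proof. You have simply written out in full the bookkeeping that the examples illustrate --- same-sign terms give $v$ of the sum as the minimum, homogeneity pulls out $k\delta$, $\min(\tau_1,\tau_2)=0$ collapses the mixed monomials, and the divisibility clause of Proposition \ref{propGTF} supplies $1\le k_j\le j$ --- so there is nothing to add.
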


\subsection{Constructible subsets of the real line}
\label{subsecConstSub}

We introduce here the notion of {\em \vco sets}  in the real valuative
affine space.
This notion corresponds to sets that are definable in the
language of ordered valued fields.
These sets are analogous to Zariski-constructible sets in
algebraic geometry and to semi-algebraic sets in real
algebraic geometry.

\begin{definition}
\label{defGSC} Let $\KVP$ be an \ovfz, and consider a finite
family $(x_j)_{j=1,\ldots,m}$ of elements of $\Krc$.
Let us call a {\em valued sign condition (a vsc fort short) for the
  family}  any condition of the following type
$$ \Land\nolimits_{j\in J} \; \sign(x_j)=\sigma_j
\quad \land\quad
\Land\nolimits_{\ell\in L} \;\sign
\bigg(
\sum\nolimits_{j\in J,\; \sigma_j\neq 0 } \ell_j v(x_j) 
\bigg) =\sigma'_{\ell}
$$
where $J\subseteq \{1,\ldots,m \}$, $\ell\in L$ ($L$ is a finite subset
of $\Z^{\{j\;:\;j\in J,\,\sigma_j\neq 0 \}}$) and
$\sigma_j,\sigma'_{\ell}\in \left\{-1,0,1\right\} $.

Let $N$ be a positive integer.
We call an {\em $N$-complete system of valued sign
conditions on the family $(x_j)_{j=1,\ldots,m}$} 
a system of vsc's that gives
all the signs $\sign(x_j)$ and all the signs
$\sign\left(\sum_{x_j\neq 0}\ell_jv(x_j) \right)$  for all
$\ell\in\{-N,\ldots,0,\ldots,N\}^{\{ j\; :\; 1\leq j\leq m,\,x_j\neq 0\}}$.
\end{definition}

An alternative definition could use
$\sign\left(\sum_{j\in J,\; } \ell_j v(x_j) \right)$  even when
$v(x_j)=\infty $ for some $j$'s. But there should be no natural way
to give a sign to an expression containing $\infty-\infty$.

\begin{definition}
\label{defbasicsa} Let $\KVP$ be an \ovsf of a \rcvf
$\RVP$, and consider a finite family
$(P_j)_{j=1,\ldots,m}$ of polynomials in  $\K[X_1,\ldots,X_n]$.
\begin{itemize}
\item  The subset of $\R^n$ made of the
$\underline{x}=(x_1,\ldots ,x_n)$ such that the
$P_j(\underline{x})$'s verify some given system of vsc's
is called a {\em basic \vco set} defined over $\KVP$.
\item  A {\em (general)  \vco set  defined over $\KVP$} is any
boolean combination $S$ of basic
\vco sets defined over $\KVP$. If $(P_j)_{j=1,\ldots,m}$ is a
family of polynomials such that any basic component of $S$ is defined as in
the first item, we say  that {\em $S$ is described from
$(P_j)_{j=1,\ldots,m}$}.
\item  Let $S\subseteq \R^n$ be a \vco set. A map
$f:S\rightarrow \R^p$ is called a \emph{\vco map} if its graph
is a \vco subset of $\R^{n+p}$.
\end{itemize}
\end{definition}
Let us recall that the order topology and the valued topology are
identical in a \rcvfz.

\begin{notation}
\label{notaVcoLine}
Let $\KVP$ be an \ovsf of a \rcvf
$\RVP$. We shall use the following notations for some
convex open \vco subsets  of the real line. They are basic \vco sets  defined
over $\KVPrc$.
$$\begin{array}{rcl}
\I^+(a,\alpha)& =  & \left\{ x\in\R\; :\;x=a+t,\; 0<t,\; v(t)=\alpha
\right\} \\
&    & \; \; \mathrm{with} \;  a\in\Krc,\; \alpha\in\GKd.  \\
\I^-(a,\alpha)& =  & \left\{ x\in\R\; :\;x=a-t,\; 0<t,\; v(t)=\alpha
\right\} \\
&    & \; \; \mathrm{with} \;a\in\Krc,\; \alpha\in\GKd.   \\
\I^+(a,\alpha,\beta)& =  & \left\{ x\in\R\; :\;x=a+t,\; 0<t,\;
\alpha<v(t)<\beta \right\}  \\
&    & \; \; \mathrm{with} \;a\in\Krc,\; \alpha<\beta\;\mathrm{in}
\;\GKd \cup \{\pm\infty \}. \\
\I^-(a,\alpha,\beta)& =  & \left\{ x\in\R\; :\;x=a-t,\; 0<t,\;
\alpha<v(t)<\beta \right\}   \\
&    & \; \; \mathrm{with} \;a\in\Krc,\;\alpha<\beta\;\mathrm{in}
\;\GKd \cup \{\pm\infty \}.  \\
\J^+(a,b,\alpha)& =  & \left\{ x\in\R\; :\;x=a+t(b-a),\; 0<t,\;
v(t)=\alpha \right\}   \\
&    & \; \; \mathrm{with} \;a<b\in\Krc,\; 0<\alpha\in\GKd.  \\
\J^-(a,b,\alpha)& =  & \left\{ x\in\R\; :\;x=b-t(b-a),\; 0<t,\;
v(t)=\alpha \right\}   \\
&    & \; \; \mathrm{with} \;a<b\in\Krc,\; 0<\alpha\in\GKd. \\
\J^+(a,b,\alpha,\beta)& =  & \left\{ x\in\R\; :\;x=a+t(b-a),\; 0<t,\;
\alpha<v(t)<\beta \right\}   \\
&    & \; \; \mathrm{with} \;a<b\in\Krc,\; 0\leq \alpha<\beta\in\GKdi.  \\
\J^-(a,b,\alpha,\beta)& =  & \left\{ x\in\R\;:\;x=b-t(b-a),\; 0<t,\;
\alpha<v(t)<\beta \right\}   \\
& & \; \; \mathrm{with} \;a<b\in\Krc,\; 0\leq \alpha<\beta\in\GKdi.  \\
\J(a,b)& =  & \left\{ x\in\R\; :\;x=a+t(b-a),\; 0<t<1,\;
v(t)=v(1-t)=0 \right\}   \\
&    & \; \; \mathrm{with} \;a<b\in\Krc.
\end{array}$$
These subsets will be called {\em \vcis  defined over $\KVP$}.
\end{notation}

\sni \textbf{Some remarks.}
\begin{itemize}
\item  The subsets of $\R$ given in definition
\ref{defbasicsa} are a priori basic \vco sets  defined over $\KVPrc$.
But they are also general \vco sets defined over $\KVP$:
this is a consequence of Remark \ref{rempropCHV2}~(2).
\item  In $\J^+(a,b,\alpha)$, $\J^-(a,b,\alpha)$,
$\J^+(a,b,\alpha,\beta)$  and
$\J^-(a,b,\alpha,\beta)$ we have $0<t<1$ (in fact $t<$ any positive
rational number) since $t>0$ and $v(t)>0.$
\item  Except when $\beta=\infty$, any \vci is closed.
\item  We have
$$\begin{array}{rcl}
   ]a,\infty [\, & = &  \I^+(a,-\infty,\infty),  \\
   ]a,b[\, & = &  \J^+(a,b,0,\infty)\;\cup\;\J(a,b)\;\cup\;
\J^-(a,b,0,\infty),  \\
   \I^+(a,\alpha,\gamma) & = &  \I^+(a,\alpha,\beta)\;\cup\;
\I^+(a,\beta)\;\cup\;\I^+(a,\beta,\gamma) \; \; \; \mathrm{if} \;
\alpha<\beta<\gamma,
\end{array}$$
and similar results with $\I^-$, $\J^+$ and $\J^-$.
\item  When $t>0,\; \alpha<\beta\in\GKdi,\; c>0\in\K$ and 
$v(c)=\alpha+\beta$ we
have the following equivalences
$$\begin{array}{rcll}
\alpha<v(t)<\beta &\Longleftrightarrow &\alpha<\min(v(t),v(c/t))
&\Longleftrightarrow\\
\alpha<v(t+c/t)&\Longleftrightarrow&
\alpha+v(t)<v(t^2+c).
\end{array}$$
\item Concerning $\J(a,b)$ we have
$$\begin{array}{rcl}
\J(a,b)& =  & \left\{ x\in\R\; :\;x=a+t(b-a),\; 0<t(1-t),\;
v(t(1-t))= 0 \right\}.
\end{array}$$
\item  All  $\J$'s could be considered as particular cases of  $\I$'s, e.g.,
$\J^+(a,b,\alpha,\beta)=\I^+(a,\alpha',\beta')$ with
$\alpha'=\alpha+v(b-a)$ and $\beta'=\beta+v(b-a)$.
\item  We could introduce
$$\begin{array}{rcl}
\J(a,b,\alpha)& =  & \left\{ x\in\R\; :\;x=a+t(b-a),\; 0<t<1,\;
v(t/(1-t))=\alpha \right\}   \\
&    & \; \; \mathrm{with} \;a<b\in\Krc,\; \alpha\in\GKd, \\
\J(a,b,\alpha,\beta)& =  & \left\{ x\in\R\; :\;x=a+t(b-a),\; 0<t<1,\;
\alpha<v(t/(1-t))<\beta \right\}    \\
&    & \; \; \mathrm{with} \;a<b\in\Krc,\;\alpha<\beta\;\mathrm{in}
\;\GKd \cup \{\pm\infty \}.
\end{array}$$
We should have $\J^+(a,b,\alpha)=\J(a,b,\alpha)$,
$\J^-(a,b,\alpha)=\J(a,b,-\alpha)$,
$\J^+(a,b,\alpha,\beta)=\J(a,b,\alpha,\beta)$,
$\J^-(a,b,\alpha,\beta)=\J(a,b,-\beta,-\alpha)$ and
$\J(a,b)=\J(a,b,0)$.
\end{itemize}

An easy corollary of Theorem \ref{thValThom} is the following
description of \vco subsets of the real line.
\begin{theorem}
\label{thVcoLine}
Let $\KVP$ be an \ovsf of a \rcvf $\RVP$.
Any   \vco set  of $\R$  defined over $\KVP$ is a finite disjoint union
of points in $\Krc$ and of \vcis defined over $\KVP$ as in
Notations \ref{notaVcoLine}.
  \end{theorem}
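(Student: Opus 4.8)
The idea is that this follows from Theorem~\ref{thValThom} once one chooses a partition of $\R$ adapted to all the polynomials that occur, so that on each piece every relevant valued sign condition becomes a condition on a single piecewise-linear function of one valuation parameter.

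First I would fix a finite family $(P_j)_{j=1,\ldots,m}$ in $\K[X]$ from which $S$ is described (Definition~\ref{defbasicsa}), together with the finite set of $\Z$-linear forms $\ell$ occurring in the valued sign conditions, and pass to the enlarged family obtained by adjoining all the derivatives $P_j^{[k]}$. If all the $P_j$ are constant then every $\sign(P_j(x))$ and every $v(P_j(x))$ is constant, so $S$ is $\emptyset$ or $\R=\I^-(0,-\infty,+\infty)\cup\{0\}\cup\I^+(0,-\infty,+\infty)$ and we are done. Otherwise let $a_1<\cdots<a_r$ be the roots in $\Krc$ of the product of the enlarged family. This partitions $\R$ into the points $a_i$ (points of $\Krc$, hence of the required form; on each of them $S$ is $\emptyset$ or $\{a_i\}$), the bounded open gaps $\,]a_i,a_{i+1}[\,$, and the two ends $\,]-\infty,a_1[\,=\I^-(a_1,-\infty,+\infty)$ and $\,]a_r,+\infty[\,=\I^+(a_r,-\infty,+\infty)$. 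It remains to decompose each gap.

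On a bounded gap $\,]a,b[\,$ every $P_j^{[k]}$ has constant nonzero sign and $P_j^{[k]}(a)P_j^{[k]}(b)\ge0$ for all $k$, so Theorem~\ref{thValThom} applies to each $P_j$. I would take as uniform parameter $\tau=v((x-a)/(b-x))$, which takes every value of $\GKd$ as $x$ runs through the gap (for $\gamma\in\GKd$ pick $s\in\Krc$ with $s>0$ and $v(s)=\gamma$, using $|s|=\sqrt{s^2}$; then $x=(a+sb)/(1+s)$ lies in the gap and gives $\tau=\gamma$). By part~(c) of Theorem~\ref{thValThom}, each $\sign(P_j(x))$ is constant on the gap and each $v(P_j(x))$ is piecewise linear in $\tau$, with integer slopes and finitely many breakpoints, all of them in $\GKd$ (they are $\Q$-linear combinations of the $v(P_j^{[k]}(a))$, the $v(P_j^{[k]}(b))$ and $v(b-a)$). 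Hence each quantity $\sum_j\ell_j v(P_j(x))$ appearing in a valued sign condition is piecewise linear in $\tau$; between consecutive breakpoints it is affine, so it has constant sign there or vanishes at exactly one further point of $\GKd$. Collect into one finite set $\delta_1<\cdots<\delta_N$ of elements of $\GKd$: all breakpoints of all the $v(P_j(x))$, all these extra sign-change points, and the value $0$. On each open slab $\{\delta_i<\tau<\delta_{i+1}\}$, on $\{\tau<\delta_1\}$, on $\{\tau>\delta_N\}$, and on each $\{\tau=\delta_i\}$, every $\sign(P_j(x))$ and every $\sign(\sum_j\ell_j v(P_j(x)))$ is constant, so $S$ meets each such slab in the empty set or in the whole slab. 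Using the equivalences recorded after Notations~\ref{notaVcoLine}, a slab with $\tau>0$ is a $\J^+(a,b,\cdot,\cdot)$ or $\J^+(a,b,\cdot)$, a slab with $\tau<0$ is a $\J^-(a,b,\cdot,\cdot)$ or $\J^-(a,b,\cdot)$, and $\{\tau=0\}$ is $\J(a,b)$; so $S\cap\,]a,b[\,$ is a finite disjoint union of \vcisz. The two unbounded gaps are treated the same way, but using the ordinary Taylor formula (the discussion preceding Theorem~\ref{thValThom}) instead: to the right of $a_r$ (resp. left of $a_1$) all derivatives of the monic part of each $P_j$ keep a single sign, so there $v(P_j(x))=\min_k(v(P_j^{[k]}(a_r))+k\,v(t))$ with $t=x-a_r$ (resp. $v(P_j(x))=\min_k(v(P_j^{[k]}(a_1))+k\,v(t))$ with $t=a_1-x$), again piecewise linear in $\tau=v(t)\in\GKd$; the same bookkeeping writes $S\cap\,]a_r,+\infty[\,$ (resp. $S\cap\,]-\infty,a_1[\,$) as a finite disjoint union of intervals of type $\I^+(a_r,\cdot,\cdot)$ and $\I^+(a_r,\cdot)$ (resp. $\I^-(a_1,\cdot,\cdot)$ and $\I^-(a_1,\cdot)$). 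Assembling the finitely many pieces gives the theorem.

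The substantive point is this bookkeeping: one must remember that $\sum_j\ell_j v(P_j(x))$ need not be monotone in $\tau$ (the $\ell_j$ can have mixed signs, and the various $v(P_j(x))$ can vary in opposite directions), which is why their zeros must be added to the breakpoint set by hand; and one must check that, after inserting $0$ among the breakpoints, each resulting slab and hyperslab is literally one of the intervals of Notations~\ref{notaVcoLine} — this is exactly what the chain of equivalences in the remarks following those notations is for. Everything else is routine.
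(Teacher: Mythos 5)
Your proposal is correct and follows essentially the same route as the paper's proof (which is only sketched there on an example): introduce all roots of the polynomials and their derivatives, apply Theorem~\ref{thValThom} on each gap to make every $v(P_j(x))$ piecewise linear in a single valuation parameter, collect the breakpoints together with the sign-change points of the $\Z$-linear combinations, and read off the resulting slabs as the intervals of Notation~\ref{notaVcoLine}. The only cosmetic difference is that you work with the single parameter $\tau=v\bigl((x-a)/(b-x)\bigr)$ from part~$(c)$ and split at $\tau=0$ afterwards, whereas the paper first splits $\,]a,b[\,$ into $\J^+(a,b,0,\infty)\cup\J(a,b)\cup\J^-(a,b,0,\infty)$ and uses $\tau_1$, $\tau_2$ separately; your explicit handling of the non-monotone linear combinations and of the degenerate constant case fills in details the paper leaves implicit.
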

\begin{proof}{Proof.}
We give a sketch of the proof on an example.
Assume that the \vco set $S$ is defined from vsc's
on 3 polynomials $P_1,P_2,P_3$ of degrees $5$, introduce all real 
roots of these
polynomials and of all their derivatives. Consider two consecutive
roots $a,b$.  We want to understand what $S\, \cap\,]a,b[\, $ is.

First let us see what
$S\, \cap\, \J^+(a,b,0,\infty) $ looks like. We know that each
$\sign(P_j(x))$ is constant on $\,]a,b[\,$. Concerning the valuations
$v(P_j(x))$,  we know from Examples \ref{exa3} and \ref{exa4}
and Theorem \ref{thValThom}
that they are piecewise linear functions of
$\tau_1=v(x-a)/v(b-a)$, e.g., of the following forms
$$\begin{array}{rcl}
v(P_1(x))& =  & \min(\mu_0,\mu_1+\tau_1,\mu_2+2\tau_1),\\
v(P_2(x))& =  & \min(\eta_0,\eta_1+\tau_1,\eta_3+3\tau_1),\\
v(P_3(x))& =  & \min(\lambda_0,\lambda_1+\tau_1,\lambda_2+2\tau_1,
\lambda_4+4\tau_1).
\end{array}$$

Note that $\tau_1$ varies on $\,]0,+\infty[\,$.
These  piecewise linear functions have polygonal graphs
inside $(\GKd\cap \,]0,+\infty[\,)\times \GKd$.
It is possible to compute the vertices
of these three polygonal graphs.
E.g., if $\lambda_4<\lambda_1<\lambda_0$ and
$3\lambda_2>2\lambda_1+\lambda_4$ we have
two vertices on the polygonal graph
of $v(P_3)$ at the points with coordinates
$$\begin{array}{rcl}
\tau_{1,1}=\beta_1=(\lambda_1-\lambda_4)/3,&\;\;&
v(P_3(x))=\lambda_1+\beta_1=\lambda_4+4\beta_1,\\
\tau_{1,2}=\beta_2=\lambda_0-\lambda_1,&\;\;&  v(P_3(x))=\lambda_0=
\lambda_1+\beta_2.
\end{array}$$

All these vertices give a finite number of valuations for $\tau_1$:
$\alpha_1<\cdots<\alpha_n$. Let $\alpha_0=0$, $\alpha_{n+1}=\infty$.
On each $\J^+(a,b,\alpha_i,\alpha_{i+1})$ ($0\leq i\leq n$) and on
each $\J^+(a,b,\alpha_i)$ ($1\leq i\leq n$), we
know that each $v(P_j(x))$ ($1\leq j\leq 3$) is a fixed ``affine function" of
$\tau_1$. So, the same is true for any linear combination
$$\ell_1 v(P_1(x))+\ell_2 v(P_2(x))+\ell_3 v(P_3(x)),$$
and we can compute 
the valuation $\tau_1$ for which such an expression
changes sign.

So the intersection $S\, \cap\, \J^+(a,b,0,\infty) $ is a finite
disjoint union of $\J^+(a,b,\alpha,\beta)$ and $\J^+(a,b,\alpha)$
subsets.

In a similar way $S\,\cap \, \J(a,b)\; $ is either empty or equal to $\J(a,b)$,
and  $S\, \cap\, \J^-(a,b,0,\infty) $ is a finite disjoint
union of $\J^-(a,b,\alpha,\beta)$ and $\J^-(a,b,\alpha)$ subsets.

Finally the intersection of $S$ with the final (resp. initial) open
interval is computed in a similar way as a finite union of $\I^+$
(resp. $\I^-$) intervals.
\end{proof}

\section{Computing in the real closure of an ordered valued field}
\label{sec OVF}

\subsection{Codes \`a la Thom and valuations in the value group}
\label{subsec.Thom}

The real closure $\Krc$ of an ordered field $(\K,\P)$ is unique up to
unique $(\K,\P)$--isomorphism.
This fact allows us to give an explicit construction of the real closure
$\Krc$ (this is ``well-known" from Tarski or even from Sturm and
Sylvester, for a fully constructive proof see \cite{LR}).

E.g., it is possible to describe any element $x$ of $\Krc$ by a
so-called {\em  code \`a la Thom} (see \cite{CCS,CR}):
\begin{definition}
\label{defcalatom}
A pair $(P,\sigma)$ where $P\in\K[X]$ is a monic polynomial of
degree $d$ and
$\sigma=(\sigma_1,\ldots, \sigma_{d-1})\in \{1 ,-1 \}^{d-1}$
codes the root $x$ of $P$ in $\Krc$ when one has
$$P(x)=0\qquad \mathrm{and}\qquad \sigma_i\cdot P^{(i)}(x)\geq 0 \quad
\mathrm{for}\;   i=1,\ldots, d-1.
$$
The pair $(P,\sigma)$ is called a
\emph{code \`a la Thom (over $\K$) for $x$.}
\end{definition}

There are algorithms that use only the algebraic structure of
$(\K,\P)$ and give the codes \`a la Thom corresponding to the
roots of $P$ in $\Krc$.
It is possible to make explicit algebraic computations and sign's
tests for such elements that are coded  \`a la Thom.
See e.g., \cite{CCS,CR} or Proposition \ref{propCH}.

On the other hand, the Newton polygon algorithm
  allows us to determine the valuation
  $v(x)$ for any $x$ in the algebraic closure
  of $\K$. How can we match these algorithms?

\subsection{Three basic computational problems in the real
closure of an \ovf}
\label{subsec.co.ovf}

Consider an \ovf $\KVP$. Since its real closure (with valuation)
is determined up to unique $\KVP$-isomorphism, the following
computational problems makes sense:

\begin{comput} \label{comput RCVF1}~ \\
Let $\KVP$ be an ordered valued field. \\
\Inp A code \`a la Thom $(P,\sigma)$ over $\K$ for an element $x$ of $\Krc$. \\
\Output The valuation $v(x)$  of $x$ in $\GKdi$.
  More precisely, compute some
$a\in \K$ and a positive integer $n$ such that
$n\times v(x)=v(a)$.
\end{comput}

\begin{remark} \label{rem RCVF1}
Assume that the leading coefficient of $P\in\V[X]$ is a unit.
The real zeroes of $P$ are in $\Vrc$.
Let  us denote by  $\overline{x}$ the residue
in $\ResKrc$ of the zero $x$
and by   $\overline{P}$ the residue in $\ResK[X]$
of the polynomial $P$.
Then it is clear that $(\overline{P},\sigma)$ is
a code \`a la Thom over $\ResK$ for $\overline{x}$ since
the residual field $\ResKrc$ can be identified with the
real closure $\ResK^\mathrm{rc}$ of $\ResK$.
\end{remark}

More generally, we can ask for algorithms
solving general existential problems.
\begin{comput} \label{comput RCVF2} ~\\
Let $\KVP$ be an \ovfz, and consider a finite family of
polynomials,
$(F_j)_{j=1,\ldots,m}$ in $\K[X]$.
Let $(x_h)_{h=1,\ldots,p}$ be the
ordered family of the zeroes of the $(F_j)$'s in $\Krc$.
Recall that the number $p$ and all the signs $\sign(F_j(x))$,
for $x$ equal to some $x_h$ or inside some
corresponding open interval, can be
determined by computations in the ordered field $(\K,\P)$.\\
\Inp The family $(F_j)_{j=1,\ldots,m}$. \\
\Output
All the valuations
  $v(F_j(x_h))$ $(h=1,\ldots,p)$ and $v(x_{h+1}-x_h)$
$(h=1,\alb\ldots,\alb p-1)$ in $\GKdi$.
\end{comput}

\begin{comput} \label{comput RCVF3} ~\\
Let $\KVP$ be an \ovfz.\\
\Inp A finite family $(F_j)_{j=1,\ldots,m}$ in $\K[X]$.
A finite family $(\ell_k)_{k=1,\ldots,r}$ of elements of $\Z^m$.\\
\Output
All occurring  systems of valued sign conditions of the following type
for the family  $(F_j(x))_{j=1,\ldots,m}$  when $x\in\Krc$:
$$ \left(  (\sign(F_j(x)))_{j=1,\ldots,m},\;
\left(\sign\left(\sum_{j\in \{1,\ldots,m \},\;F_j(x)\neq 0 }
\ell_{k,j}\,v(F_j(x)) \right) \right)_{k=1,\ldots,r} \right).
$$
\end{comput}

\begin{remark}
\label{remRCVF3}
Assume that the family is stable under derivation. From Theorem
\ref{thValThom} (see e.g., the proof of Theorem \ref{thVcoLine})
it is clear that Computational Problem \ref{comput RCVF3} can be solved by
using the solution of Computational Problem \ref{comput RCVF2}.
In fact we can describe in a finite
way all occurring lists
$$\left(  \; (\sign(F_j(x)))_{j=1,\ldots,m}
,\;(v(F_j(x)))_{j=1,\ldots,m}\; \right)$$
when $x\in\Krc$: for $x$ on any \vci $I$ used in the proof of Theorem
\ref{thVcoLine} we have $v(F_j(x))=\mu_{I,j}+m_{I,j}v(t)$
where $t$ is either
$(x-x_h)/(x_{h+1}-x_h)$, or
$(x_{h+1}-x)/(x_{h+1}-x_h)$, or
$x_1-x$ or $x-x_p$.
\end{remark}

\subsection{Solving the first problem}
\label{subsec.Solve1}

\medskip \noindent \textbf{Algorithm RCVF1 solving Problem \ref{comput RCVF1}.}
Recall that $(P,\sigma)$ is a code \`a la Thom for a root $x$ of
$P\in\K[X]$. We can assume w.l.o.g.\  that $P(0)\neq 0$,  $x> 0$
(else replace $P$ by $P(-X)$) and that $P$ is monic.
Let $(x_i)_{i=1,\ldots,d}$ be the roots of $P$ in $\Kac$.
Using the \NPAz, we compute the multiset
$[\,v(x_i)\mid i=1,\ldots,n\,]$.
So we can express the set of valuations $v(x_i)$ as
$(v(c_j)/n_j)_{j=1,\ldots,r}$ for some $r$-tuple
$(c_j,n_j)_{j=1,\ldots,r}$ with $c_j>0$ in $\K$, $n_j\in \N$ and
$v(c_j)/n_j< v(c_{j+1})/n_{j+1}$ for $j=1,\ldots,r-1$.

\noi Consider the LCM $n$ of denominators $n_j$ and
``replace each $x_i$ by $z_i= x_i^{n}$": i.e., compute
$Q(X)=\prod_i(X-z_i)$ and compute a code \`a la Thom $(Q,\sigma')$
for $z= x^{n}$. Let $b_j=c_j^{n/n_j}$.
Then  $v(b_j)=(n/n_j)v(c_j) $ for $j=1,\ldots,r$ and
$$v(b_1) < \cdots < v(b_{r}).$$
So we have also
$$b_1 > \cdots > b_r > 0.$$
By rational computations in $(\K,\P)$ we can settle one of the three
following inequalities in  $\Krc$
$$\begin{array}{rl}
&z \geq b_1,       \\
b_{j} \geq z \geq b_{j+1}    &  \mathrm{\; with\; some\; }
j \in \{1,\ldots,r-1 \},   \\
&b_{r} \geq z >0.      \\
\end{array}$$
In the first case we conclude that $v(z)=v(b_1)$. In the last case
$v(z)=v(b_r)$. In the remaining case we know that
$$v(b_{j}) \leq v(z) \leq v(b_{j+1})\qquad \mathrm{ so}\quad
v(z)=v(b_{j})\quad \mathrm{ or}\quad v(z)=v(b_{j+1}).
$$
We have to find  the exact valuation.
Consider $c\in \P $ verifying
$$\displaylines{
0< v(c)\leq \mathrm{ min}\left( v\left({b_{j}\over b_{j-1}}\right),
v\left({ b_{j+1}  \over  b_{j} } \right) \right)  \mathrm{ \quad if\; }
j >1 \cr
0< v(c)= v\left({ b_{2}  \over  b_{1} } \right)  \mathrm{ \quad if\; }
j =1
}$$
(if $j>1$, $c$ can be chosen as ${ b_{j}  /  b_{j-1} }$ or
$ b_{j+1}  /  b_{j} $).
Next consider the linear fractional change of variable
$$y\longmapsto \varphi(y) = { y  \over  1+cy^{2} }$$
We have

\noi
--- If $v(y) \geq 0$ then  $v(\varphi (y))=v(y)$.

\noi
--- If $v(y) \leq -v(c)$ then, letting $y'=1/y$  we get
$$v(y') \geq v(c)>0, \;
\varphi (y)= { y'  \over  {c+{y'}^2} }\; {\quad \mathrm{and} \quad }  \;
v(\varphi (y))=v(y')-v(c) \geq 0.$$
So the monic polynomial
$$R(Y)=\prod_i{\left(Y-\varphi\left({z_i\over b_j}\right) \right)}
$$
has coefficients in $\V$.
Moreover $v(z/b_{j})\geq 0$,  so $\varphi(z/b_j)$ is a unit iff
$v(b_{j}) = v(z)$ since $v(\varphi(z/b_{j}))=v(z/b_{j})$.

We can compute a code \`a la Thom $(R,\sigma'')$
for $\varphi (z/b_j)$. This gives a code \`a la Thom
$(\overline{R},\sigma'')$ for $\overline{\varphi (z/b_j)}$
(i.e., $\varphi (z/b_j)$ considered as an element of
$\ResK^\mathrm{rc}$).
Finally we test whether this code is verified by
$\overline{0}$ (which is a root of $\overline{R}$).
In case of negative answer then
$v(z) =v(b_{j})$. Otherwise $v(z) =v(b_{j+1})$.
\eop
\begin{remarks} \label{AlgoRCVF1}~

\noindent 1) In a more explicit view, we should ask for computing two
nonnegative elements $a$ and $b$ of~$\K $ and an integer $n$ such that
$a \leq \vert x \vert^{n} \leq b$ and $v(a)=v(b)$.

\noindent 2) Clearly algorithm \textbf{RCVF1} allows us to run sure computations
inside $\KVPrc$ when we know how to compute inside
$\KVP$.
\end{remarks}

\subsection{Solving the second problem}
\label{subsec.Solve2}

First we recall the Cohen-H\"ormander algorithm for ordered fields
(see e.g., \cite{BCR} chapter~1).

\begin{definition}
\label{defCTS}
Let $(\K,\P)$ be an ordered field and $(F_j)$   a finite family of univariate
polynomials in $\K[X]$.
A \emph{complete tableau of signs for the family $(F_j)$} is the
following \emph{discrete data}~$T$:
\begin{itemize}
\item  The ordered list $(x_k)_{k=1,\ldots,r}$ of all the roots of all the
$F_j$'s in $\Krc$.
\item  The signs ($\in\{-1,0,+1\}$) of all the $F_j$'s at all the
$x_k$'s.
\item  The signs of all the $F_j$'s in each interval
$\,]-\infty,x_1[\,$,
$\,]x_k,x_{k+1}[\,$ $(1\leq k\leq r-1)$ and $\,]x_r,+\infty[\,$.
\end{itemize}
We call an $x_k$ \emph{a point of the tableau} $T$. Similarly an
interval
$\,]-\infty,x_1[\,$ or $\,]x_k,x_{k+1}[\,$ or
$\,]x_r,+\infty[\,$ is
  called \emph{an interval of the tableau} $T$.
\end{definition}

In this tableau $x_k$ is merely a name for the corresponding root,  it may be
coded by the number $k$ or in another way.

\begin{proposition}
\label{propCH}
\emph{(Cohen-H\"ormander's algorithm for computing the complete tableau
of signs for a finite family of univariate polynomials)}
Let $(\K,\P)$ be an ordered subfield of a \rcf $(\R,\P_\R)$.
Let $L=(F_1,\ldots, F_k)$ be a list of polynomials in $\K[Y]$.
Let $L'$ be the family of  polynomials generated by the
elements of $L$ and by the operations $P\mapsto P'$  and
$(P,Q) \mapsto \mathrm{Rem}(P,Q)$ for $\deg(P) \ge \deg(Q) \ge 1$.
Then $L'$ is finite and one can compute
the complete tableau of signs for $L'$
in terms of  the following data:
\begin{itemize}
\item the degree of each polynomial in the family $L'$,
\item the diagrams of operations $P \mapsto P'$
and $(P,Q) \mapsto \mathrm{Rem}(P,Q)$,
\item the signs of constants $\in L'$.
\end{itemize}
\end{proposition}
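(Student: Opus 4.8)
The plan is to run the classical Cohen--H\"ormander argument (cf.\ \cite{BCR}, Chapter~1). First I would check that $L'$ is finite: no allowed operation raises the degree, and both $P\mapsto P'$ and $(P,Q)\mapsto\mathrm{Rem}(P,Q)$ produce a polynomial of degree strictly below $\deg P$ resp.\ $\deg Q$. So, writing $D$ for the top degree occurring in $L$ and $S_d\subseteq L'$ for the elements of degree $d$, a downward induction on $d$ (from $S_D\subseteq L$, which is finite) shows each $S_d$ is finite, being built from finitely many $F_j$'s, finitely many derivatives of $\bigcup_{e>d}S_e$, and finitely many remainders of pairs from $\bigcup_{e>d}S_e$; hence $L'=\bigcup_{d\le D}S_d$ is finite. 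A zero polynomial, if produced, is discarded: it contributes no root and has sign $0$ everywhere.

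Next I would enumerate $L'=(G_1,\dots,G_N)$ with $\deg G_1\le\dots\le\deg G_N$ and prove, by induction on $i$, that the complete tableau of signs of the sublist $(G_1,\dots,G_i)$ is computable from the data in the statement. The base case is trivial (an empty family, or the sub-family of constants, whose tableau is read off directly from the ``signs of constants''). For the inductive step, pass from $(G_1,\dots,G_i)$ to $(G_1,\dots,G_{i+1})$; the case $\deg G_{i+1}=0$ is immediate, so set $P:=G_{i+1}$ with $d:=\deg P\ge 1$ and let $x_1<\dots<x_r$ be the points already found. The step rests on three observations. First, $P'=G_\ell$ with $\deg G_\ell=d-1<d$, so $\ell\le i$: the roots of $P'$ are among the $x_k$, we already know the sign of $P'$ on every interval of the current tableau, hence $P$ is strictly monotone on each such interval and has at most one root there. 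Second, each point $x_k$ is a zero of some $G_j$ with $j\le i$; since $G_j$ has the root $x_k$ it is non-constant, so $1\le\deg G_j\le\deg G_i\le d$, whence $\mathrm{Rem}(P,G_j)=G_m\in L'$ with $\deg G_m<\deg G_j\le d$, so $m\le i$, and from $P=QG_j+G_m$ with $G_j(x_k)=0$ we read $\sign(P(x_k))=\sign(G_m(x_k))$ off the current tableau. Third, the leading coefficient of $P$ is a positive rational multiple of the constant $P^{(d)}=G_{i+1}^{(\deg G_{i+1})}\in L'$, so its sign is among the ``signs of constants''; this fixes the sign of $P$ on $\,]x_r,+\infty[\,$ (namely $\sign(P^{(d)})$) and on $\,]-\infty,x_1[\,$ (namely $(-1)^d\sign(P^{(d)})$).

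With these in hand, on each interval of the current tableau $P$ is strictly monotone with known signs (finite, or at $\pm\infty$) at its two ends, so monotonicity decides whether $P$ has its unique root strictly inside and fixes the sign of $P$ on each resulting piece. Adjoining the new roots of $P$ refines the tableau, and the signs of $G_1,\dots,G_i$ at a new point or on a new sub-interval are simply inherited, each $G_j$ being of constant sign on the old interval that contains it. This produces the complete tableau of $(G_1,\dots,G_{i+1})$ using only degrees, the diagram recording which $G_m$ equals $P'$ or $\mathrm{Rem}(P,G_j)$, and signs of constants; taking $i=N$ finishes the argument.

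\textbf{Main obstacle.} I expect the only real work to be the bookkeeping of the inductive step: arranging the enumeration by degree so that $P'$, the relevant remainders $\mathrm{Rem}(P,G_j)$, and the iterated derivative $P^{(d)}$ have all been processed before $P$, together with the small but essential point that every point of the current tableau is a zero of a \emph{non-constant} already-listed polynomial --- which is exactly what legitimizes the remainder substitution, since it requires $\deg G_j\ge1$. The remaining ingredients (finiteness of $L'$, strict monotonicity of $P$ between consecutive zeros of $P'$, at most one root per monotone piece, and the behaviour at $\pm\infty$ via the leading coefficient) are routine.
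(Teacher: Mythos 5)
Your proposal is correct and follows essentially the same route as the paper: order $L'$ by nondecreasing degree, induct on the sublist, use $P'$ for monotonicity on the intervals of the current tableau, substitute $\mathrm{Rem}(P,G_j)$ at each old root to get $\sign(P(x_k))$, and refine. The only (harmless) divergences are cosmetic — you discard the zero polynomial where the paper assigns it degree $-1$, and you spell out the sign of $P$ at $\pm\infty$ via the constant $P^{(d)}$, a detail the paper leaves implicit in its treatment of $\pm\infty$ as points of the tableau.
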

\begin{proof}{Proof.}
Let us remark that in this algorithm the zero polynomial
can appear in $L'$ as a remainder
$\mathrm{Rem}(P,Q)$ where  $\deg(P) \ge \deg(Q) \ge 1$.
The degree of the zero polynomial is $-1$.

The list $L'$ is finite: one makes systematically the operation
``derivation of every previously obtained polynomial" and
``remainders of all previously obtained couple of polynomials",
and one gets a finite family at the end
since degrees are decreasing.

\smallskip
Let us number the polynomials in $L'$ with an order compatible with
the order on the degrees. Let $L'_m$ be the subfamily of
$L'$ made of polynomials numbered from 1 to $m$. This family is
obviously
stable under the operations ``derivation" and ``remainder by a
division" which decrease strictly the degrees. Denote lastly by $T_m$
the corresponding complete tableau of signs.

We are going to prove, by induction on $m$,
that the complete tableau of signs of
the polynomials in the family $L'_m$ can be obtained by using
only the authorized informations.
As long as polynomials are of degree $0$, this is clear.
Suppose it is true up to $m$.
Let $P$ be the polynomial of number $m+1$ in $L'$.
On each interval of $T_m$, the polynomial $P$ is strictly monotonic.
Every point $a$ of $T_m$ is either $+\infty$, or $-\infty$, or a root
of a certain polynomial $Q$ with number $\le m$, and in this case,
if $R=\mathrm{Rem}(P,Q)$, we have $P(a)=R(a)$.
The sign of $P(a)$ is hence known in every
case from the authorized informations.
This allows us to know on which open intervals of $T_m$
the polynomial $P$ has a root in $\R$.
Let $x$ be such a root of $P$ on one of these open
intervals $I =\,]a,b[\,$. If $Q$ is a polynomial of number
$\le m$ in $P$, its sign on the interval
$I$ is known. This means we know its sign
at the point $x$, and on intervals $\,]a,x[\,$ and $\,]x,b[$.
With respect to $P$,  its signs on $\,]a,x[\,$ and on $\,]x,b[\,$
are also known since $P$ is strictly monotonic on the interval.
The complete tableau of signs for $L'_{m+1}$ is thus known
from the authorized
informations and the complete tableau of signs for $L'_m$.
\end{proof}

In this algorithm we remark that each zero of
the tableau is obtained with a Thom's encoding.

An extension of previous algorithm will solve
Problem \ref{comput RCVF2}. First we give a valued version
for the complete tableau of signs.
\begin{definition}
\label{defCTVSC}
Let $\KVP$ be an \ovf and $(F_j)_{j\in J}$ a
finite family of univariate polynomials in $\K[X]$.
A {\em complete tableau of vsc's for the family $(F_j)$} is the
following data $T$:
\begin{itemize}
\item  The ordered list $(x_k)_{k=1,\ldots,r}$ of all the roots of all
the $F_j$'s in $\Krc$.
\item  The complete tableau of signs for the family
$(F_j)_{j\in J}.$
\item  All the valuations $v(x_{k+1}-x_k)$ $(k=1,\ldots,r-1).$
\item  All the valuations $v(F_j(x_k))$ $(j\in J,\; k=1,\ldots,r).$
\end{itemize}
\end{definition}

\noindent\textbf{Algorithm RCVF2 solving Problem \ref{comput RCVF2}.}
A first possibility is to use algorithm \textbf{RCVF1}.
We think that it is interesting to indicate another possibility which
goes in the same spirit as the Cohen-H\"ormander algorithm for ordered
fields. This gives us also simple proofs for theorems in sections
\ref{sec qela} and \ref{secConstSubSpace}.
Call $(P_j)$ the list $L'$ in Proposition  \ref{propCH}. Call
$(x_{m,k})_{k=1,\ldots,r_m}$ the ordered list of all roots of
$L'_m=(P_j)_{j=1,\ldots,m}$.
We replace in the proof of Proposition \ref{propCH}
the complete tableau of signs $T_m$ of $L'$ by
$S_m=T_m\cup V_m$ where $V_m$ collects the valuations
$v(P_j(x_{m,k}))$  ($j\in\{1,\ldots,m\},\; k\in\{1,\ldots,r_m\}$) and
$v(x_{m,k+1}-x_{m,k})$ ($k\in\{1,\ldots,r_m-1\}$.)

Suppose we have done the job up to $m$.
Let $P=P_{m+1}$ be the polynomial of index
$m+1$ in $L'$. The tableau $T_{m+1}$ is computed as in
Proposition  \ref{propCH}.  It remains to compute
missing informations in $V_{m+1}$.

At every root  $a=x_{m,k}$ of a polynomial $Q=P_\ell$
with index $\ell\le m$, if $R=\mathrm{Rem}(P,Q)$,
we have $P(a)=R(a)$ and  $R$ is in $L'_m$, so
the valuation $v(P(a))$ is known from $V_m$.

Let $x=a+t_1(b-a)$ be a root of $P$ on an  open
interval $I =\,]x_{m,k},x_{m,k+1}[\,=\,]a,b[\,$ of $T_m$.
In order to compute all the $v(P_j(x))_{j=1,\ldots,m}$
it is sufficient to compute
$v(t_1)=\tau_1$ and $v(t_2)=\tau_2$ ($t_2=1-t_1$):
Theorem \ref{thValThom}  says us how to get
the valuations $v(P_j(x))_{j=1,\ldots,m}$
from $V_m$, $\tau_1$ and $\tau_2$.

In order to compute  $\tau_1=v(t_1)$ we use a GTF
that expresses $P(x)=P(a+t_1(b-a))=0$ as
$$P(a)+t_1\cdot\left(\sum_{j=1}^d {\epsilon_j\cdot e^j \cdot
G_{j,\varepsilon}(t_1,t_2) \cdot P^{[j]}(a_j)} \right) \qquad
(a_j=a \; \mathrm{or} \;  b)$$
where $e=b-a$,
$t_1\cdot G_{j,\varepsilon}(t_1,t_2)=H_{j,\varepsilon}(t_1,t_2)$
and
$$\sign(\epsilon_jP^{[j]}(a_j))=\sign(-P(a))\;\;(1\leq j\leq d).$$
Moreover, the valuations $v(P(a)=\nu$, $v(P^{[j]}(a_j))=\nu_j$
and $\delta=v(b-a)$ are known.
 From the properties of $H_{j,\varepsilon}$, we know that
$G_{j,\varepsilon}(t_1,t_2)$ is a unit if $\tau_1=0$,
so its valuation in $\GKdi$ depends only on $\tau_1$.
So we get
$$v(P(a))=\nu=\min(\nu_1+\delta+\tau_1,\,\nu_2+2\delta+k_2\tau_1,
\ldots,\nu_d+d\delta+k_d\tau_1)
$$
($\tau_1\ge 0$, and some $\nu_k$'s may be infinite).
The right hand side is an increasing piecewise linear function of $\tau_1$
so we have a unique and explicit solution $\tau_1$.
With $\mu_i=\nu_i+i\delta$ we precisely get
$$ \tau_1=\max\left(\nu-\mu_1,{\nu-\mu_2 \over k_2}, \ldots,
{\nu-\mu_d \over k_d}\right).
$$

Finally $\tau_2$ is computed analogously and we can fill up $V_{m+1}$.

Remark also that if $x$ is on the last interval
$\,]x_{m,r_m},+\infty[\,=\,]a,+\infty[\,$
of $T_m$, we can compute $v(x-a)$ in a similar way
by using the usual Taylor formula.
\eop
\begin{definition}
\label{defQsl}
In an additive divisible ordered group $G$ we consider terms built
from variables $\alpha_j$ by $\Q$-linear combinations and by using the
operations $\min$ and $\max$. We call such a term a
{\em \qsl term}. The function defined by such a term is called a
{\em \qsl function of the $\alpha_j$'s}.
\end{definition}

We get the following theorem,
similar to  Proposition \ref{propCH}.
\begin{theorem}
\label{propCHV}
\emph{(An algorithm \`a la Cohen-H\"ormander for computing the complete
tableau of vsc's  for a finite family  of univariate polynomials)  }
Let $\KVP$ be an \ovsf of a \rcvf $\RVP$.
Let $L=(F_1, \ldots, F_k)$ be a list of polynomials in $\K[Y]$.
Let $L'$ be the (finite) family of polynomials generated by the
elements of $L$
and by the operations $P \mapsto P'$ and
$(P,Q) \mapsto \mathrm{Rem}(P,Q)$ for $\deg(P) \ge \deg(Q) \ge 1$.
Call $(c_j)$ the list of  constants $\in L'$.

Then one can compute the complete tableau of vsc's  for $L'$
in terms of the following data:
\begin{itemize}
\item the degree of each polynomial in the family,
\item the diagrams of operations $P \mapsto P'$
and $(P,Q) \mapsto \mathrm{Rem}(P,Q)$ in $L'$,
\item the signs $\sign(c_j)$,
\item the valuations $v(c_j)$.
\end{itemize}
Moreover, all the valuations $v(x_{k+1}-x_k)$
and all the valuations $v(P_j(x_k))$ are given as fixed \qsl functions
of the $v(c_j)$'s: each such \qsl function
is a fixed \qsl term (in the ``variables" $v(c_j)$'s) that depends
only on the complete tableau of signs of $L'$.
\end{theorem}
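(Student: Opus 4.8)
The plan is to extract the statement from the construction already presented as Algorithm RCVF2 in section \ref{subsec.Solve2}, reorganized as an induction and completed by a bookkeeping argument for the $\Q$-semilinear claim. First I would invoke Proposition \ref{propCH}: it gives that $L'$ is finite and that its complete \emph{sign} tableau $T$ is computable from the degrees, the operation diagrams and the signs $\sign(c_j)$ alone. So everything reduces to adjoining to $T$ the valuations $v(x_{k+1}-x_k)$ and $v(P_j(x_k))$, and exhibiting each of them as a $\Q$-semilinear function of the $v(c_j)$'s whose underlying term depends only on $T$.

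Next I would set up the same induction as in the proof of Proposition \ref{propCH}: number $L'=(P_1,P_2,\dots)$ by nondecreasing degree, so each $L'_m=(P_1,\dots,P_m)$ is stable under $P\mapsto P'$ and under $(P,Q)\mapsto\mathrm{Rem}(P,Q)$, and write $S_m=T_m\cup V_m$ for the complete tableau of vsc's of $L'_m$. The induction hypothesis: $T_m$ is a function of the discrete data, and each entry of $V_m$ is a $\Q$-semilinear term in the $v(c_j)$'s whose form is determined by $T_m$. The base case is trivial ($P_j$ of degree $0$, no roots, the $v$ of a constant polynomial $c_j$ being the variable $v(c_j)$). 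For the step $m\to m+1$ with $P=P_{m+1}$ of degree $d$: $T_{m+1}$ comes from $T_m$ as in Proposition \ref{propCH}, and it remains to fill in $V_{m+1}$. At a root $a$ of an earlier $Q=P_\ell$ I would use $P(a)=\mathrm{Rem}(P,Q)(a)$, a value in $V_m$ because $\mathrm{Rem}(P,Q)\in L'_m$ (smaller degree); similarly $v(P^{[j]}(a))\in V_m$ for $j<d$ since $P^{[j]}\in L'_m$, while $v(P^{[d]})$ is one of the $v(c_j)$'s. For a root $x=a+t_1(b-a)$ of $P$ in a bounded interval $]a,b[$ of $T_m$ I would run the GTF step of Algorithm RCVF2: a GTF based at $a$ with the sign pattern forced by $T_m$ rewrites $P(x)=0$ as an equation whose right-hand side is a sum of terms of a single sign, giving the explicit increasing-piecewise-linear solution $\tau_1=v(t_1)$ as a $\max$ of expressions $(\nu-\mu_j)/k_j$ with $\nu=v(P(a))$, $\mu_i=v(P^{[i]}(a_i))+i\,v(b-a)$, and integers $1\le k_j\le j$ read off the GTF (symmetrically $\tau_2=v(t_2)$); then $v(x-a)=\tau_1+v(b-a)$, $v(b-x)=\tau_2+v(b-a)$, and Theorem \ref{thValThom} applied to each $P_j$ ($j\le m$) on $]a,b[$ at $x$ gives $v(P_j(x))$ as a $\min$ of $\Z$-linear forms in $V_m\cup\{\tau_1,\tau_2\}$. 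Since $P'\in L'_m$ separates any two roots of $P$, no two new roots are consecutive, so every new consecutive-difference valuation is of this shape; roots of $P$ in the two unbounded intervals of $T_m$ I would treat with the ordinary Taylor formula at the extreme root of $T_m$, beyond which $P',\dots,P^{(d-1)}$ have constant signs and $P$ contributes at most one root.

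For the $\Q$-semilinear conclusion: each quantity put into $V_{m+1}$ is a $\min$/$\max$ of $\Q$-linear combinations of entries of $V_m$ and of $v(c_j)$'s, hence by the induction hypothesis a $\Q$-semilinear term in the $v(c_j)$'s; the remaining point is that its \emph{form} does not depend on the $v(c_j)$'s. This holds because every branching the algorithm performs — which interval of $T_m$ a root of $P$ falls into, which GTF is used, which $k_j$ are extracted, whether $\tau_1$ or $\tau_2$ is the active variable, the sign data fed into Theorem \ref{thValThom} — is a sign decision, never a comparison of valuations, the $\min$'s and $\max$'s being carried along symbolically; and by Proposition \ref{propCH} all sign decisions are functions of the degrees, the operation diagrams and the $\sign(c_j)$'s, i.e.\ of $T$ alone. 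Hence the $\Q$-semilinear term produced for each $v(x_{k+1}-x_k)$ and each $v(P_j(x_k))$ depends only on the complete sign tableau of $L'$.

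I expect the main obstacle to be exactly this last bookkeeping: making airtight that the algorithm never branches on a valuation comparison — so that the semilinear term really is sign-tableau-determined — and checking in the induction that all auxiliary polynomials invoked at stage $m+1$ (the remainders $\mathrm{Rem}(P,Q)$, the derivatives $P^{[j]}$ for $j<d$) have genuinely been processed earlier, so their values at the roots of $T_m$ sit in $V_m$. A minor technical wrinkle is the unbounded-interval case: one must observe that past the last root of $L'_m$ the derivatives of $P$ have no further roots, so $P$ has at most one new root $x^\ast$ there, and the Taylor identity $0=\sum_k P^{[k]}(a)(x^\ast-a)^k$ — with all nonconstant terms of the single sign $-\sign(P(a))$ — solves for $v(x^\ast-a)$ exactly as in the bounded case.
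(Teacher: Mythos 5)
Your proposal is correct and follows essentially the same route as the paper: the paper's own proof simply says the result follows from Proposition \ref{propCH} by a ``close inspection of Algorithm \textbf{RCVF2}'', and your induction on $L'_m$ with $S_m=T_m\cup V_m$, the GTF-based formula $\tau_1=\max\left(\nu-\mu_1,\frac{\nu-\mu_2}{k_2},\ldots,\frac{\nu-\mu_d}{k_d}\right)$, and the appeal to Theorem \ref{thValThom} reproduce that algorithm exactly. Your added bookkeeping (that all branching is on signs, never on valuation comparisons, so the \qsl terms depend only on the sign tableau) is precisely the point the paper leaves implicit in the ``Moreover'' clause.
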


\begin{proof}{Proof.}
This theorem is an extension of Proposition \ref{propCH}.
The proof is similar.
In fact we get all results by a close inspection of Algorithm
\textbf{RCVF2}.
\end{proof}

\subsection{Solving the third problem}
\label{subsec.Solve3}

\medskip \noindent \textbf{Algorithm RCVF3 solving Problem \ref{comput RCVF3}.}
We  run Algorithm \textbf{RCVF2} and we apply Theorem \ref{thValThom}:
  see Remark \ref{remRCVF3}.
\eop

\begin{definition}
\label{defMCTVSC}
Let  $(F_j)_{j\in J}$ be  a finite family of univariate polynomials in $\K[X]$
(where $\KVP$ is an \ovfz).
We assume the family to be stable under derivation.
Let $M$ be a positive integer.

An {\em $M$-complete tableau of vsc's for the family $(F_j)$} is the
following {\em discrete data} $T$:
\begin{itemize}
\item  The ordered list $(x_k)_{k=1,\ldots,r}$ of all the roots of all
the $F_j$'s in $\Krc$.
\item  For each $k=1,\ldots,r$, the $M$-complete system of vsc's 
(see Definition \ref{defGSC}) for
the family $(F_j(x_k))_{j\in J}$. 
\item  For each $k=1,\ldots,r-1$
\begin{itemize}
\item  The $M$-complete system of vsc's for the family
$(F_j(x))_{j\in J}$  for $x\in \J(x_k,x_{k+1}).$
\item  A partition of $\,\J^+(x_k,x_{k+1},0,\infty)\,$ as a
finite union of $\,2n_k+1\,$ \vcis
$$\bigcup_{i=0,n_k}\J^+(x_k,x_{k+1},\alpha_{k,i},\alpha_{k,i+1})
\; \; \cup\;\;
\bigcup_{i=1,n_k}\J^+(x_k,x_{k+1},\alpha_{k,i}),
$$
(where $\alpha_{k,0}=0$  and $\alpha_{k,n_k+1}=\infty$) and for each
  \vci $A$ of this partition,
the $M$-complete system of vsc's for the family
$(F_j(x))_{j\in J}$ which is the same one for any $x\in A$.
\item  A similar data concerning $\J^-(x_k,x_{k+1},0,\infty).$
\end{itemize}
\item  Similar data concerning $\I^-(x_1,-\infty,\infty)$ and
  $\I^+(x_r,-\infty,\infty)$.
\end{itemize}
\end{definition}

In this tableau the $\alpha_{k,i}$'s
($0<\alpha_{k,1}<\cdots<\alpha_{k,n_k}<\infty $)
are purely formal and $n_k$ is the
only relevant information concerning
$\alpha_{k,1},\ldots,\alpha_{k,n_k}$.

We now state a result that precises the output of
Algorithm \textbf{RCVF3}.
\begin{theorem}
\label{propMCHV}
\emph{(An algorithm \`a la Cohen-H\"ormander for computing an
$M$-complete tableau of vsc's  for a finite family  of univariate
polynomials)}\\
Let $\KVP$ be an \ovsf of a \rcvf $\RVP$.
Let $M$ be a positive integer.
Let $L = (F_1, \ldots, F_k)$ be a list of polynomials in
$\K[Y]$.
Let $L'$ be the family of polynomials generated by the
elements of $L$
and by the operations $P \mapsto P' $ and
$(P,Q) \mapsto \mathrm{Rem}(P,Q) $ for $\deg(P) \ge \deg(Q) \ge 1$.
Call $(c_j)$ the list of constants $\in L'$.

Then one can compute the
$M$-complete tableau of vsc's  for $L'$  in terms of the following
data:
\begin{itemize}
\item the degree of each polynomial in the family,
\item the diagrams of operations $P \mapsto P'$
and $(P,Q) \mapsto \mathrm{Rem}(P,Q)$ in $L'$,
\item the $N$-complete system of vsc's  for the family $(c_j)$,
\end{itemize}
where $N$ is an integer depending only on $M$ and on the list of
degrees in $L$.
\end{theorem}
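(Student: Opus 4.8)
The plan is to derive this theorem as a refinement of Theorem \ref{propCHV}, exactly as Algorithm \textbf{RCVF3} is built on top of Algorithm \textbf{RCVF2}. First I would observe that, since $L'$ is stable under the operations $P\mapsto P'$ and $(P,Q)\mapsto\mathrm{Rem}(P,Q)$, it is in particular closed under derivation, so Remark \ref{remRCVF3} and Theorem \ref{thValThom} apply verbatim: running Algorithm \textbf{RCVF2} on $L'$ produces, for each root $x_k$ and each interval of the tableau, the constant sign of every $F_j$ and a description of $v(F_j(x))$ as a fixed affine function $\mu_{I,j}+m_{I,j}v(t)$ of the local parameter $v(t)$, where $t$ runs over $(x-x_k)/(x_{k+1}-x_k)$ on $\J^+$, over $(x_{k+1}-x)/(x_{k+1}-x_k)$ on $\J^-$, and over $x_1-x$ or $x-x_r$ on the two unbounded intervals. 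Theorem \ref{propCHV} tells us moreover that the intercepts $\mu_{I,j}$ are fixed \qsl terms in the $v(c_j)$'s, depending only on the complete tableau of signs of $L'$, and the slopes $m_{I,j}$ are fixed nonnegative integers depending only on that tableau.

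Next I would assemble the $M$-complete tableau from this data. On any fixed \vci where $v(t)$ ranges over an interval of $\GKd$, each linear combination $\sum_{j}\ell_{k,j}v(F_j(x))$ (for $\ell_k$ in the prescribed $M$-cube, here one uses the $\ell$'s coming from Definition \ref{defGSC} with bound $M$) is again a fixed affine function $A_{k}+B_{k}\,v(t)$ of $v(t)$, with $A_k$ a \qsl term in the $v(c_j)$'s and $B_k$ an integer, both depending only on the sign tableau. Its sign changes at the single point $v(t)=-A_k/B_k$ when $B_k\neq0$. Collecting over all $\ell_k$ in the $M$-cube and all $j$ (for the conditions $F_j(x)=0$, i.e. the breakpoints forced by the polygonal graph of each $v(F_j)$ as in the proof of Theorem \ref{thVcoLine}), one obtains on each original interval $\J^+(x_k,x_{k+1},0,\infty)$ a finite set of values $0<\alpha_{k,1}<\cdots<\alpha_{k,n_k}<\infty$ of $v(t)$, and between consecutive ones — and at each one — the $M$-complete system of vsc's is constant and computable. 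The comparisons needed to sort these $\alpha$'s and to decide the signs of the $A_k+B_k v(t)$ on each subinterval are comparisons between \qsl terms in the $v(c_j)$'s, hence are determined by the $N$-complete system of vsc's for the family $(c_j)$ once $N$ is taken large enough to accommodate all the integer coefficients that arise; and those coefficients (the $B_k$'s, the denominators of the \qsl terms, and the $\ell$-multipliers up to $M$) are bounded purely in terms of $M$ and the list of degrees in $L$, which bounds the size of $L'$ and of the GTF's it uses. This is the key bookkeeping point: $N$ depends only on $M$ and $\deg L$, not on $\K$ or the $c_j$'s. The cases of $\J^-(x_k,x_{k+1},0,\infty)$, of $\J(x_k,x_{k+1})$ (where $v(t)=v(1-t)=0$ so every $v(F_j(x))$ is the single constant $\mu_{I,j}$), and of the two unbounded intervals $\I^\pm$ are handled in exactly the same way with the appropriate local parameter.

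Finally, the $M$-complete systems of vsc's at the points $x_k$ themselves are read directly from the valuations $v(F_j(x_k))$, which Theorem \ref{propCHV} already provides as \qsl terms in the $v(c_j)$'s; the finitely many comparisons $\mathrm{sign}(\sum_j\ell_{k,j}v(F_j(x_k)))$ for $\ell_k$ in the $M$-cube are again decided by the $N$-complete system for $(c_j)$ with the same $N$. Combining everything yields the $M$-complete tableau of vsc's for $L'$ computed from the stated data. The main obstacle is the uniform bound on $N$: one must check that every integer coefficient entering a \qsl comparison — the slopes $k_j$ from Proposition \ref{propGTF} (bounded by $\deg L$), the multipliers $\ell$ up to $M$, and the denominators introduced when solving $\tau_1=\max(\nu-\mu_1,(\nu-\mu_2)/k_2,\ldots)$ in Algorithm \textbf{RCVF2} — is bounded in terms of $M$ and the degrees alone, so that a single $N=N(M,\deg L)$ suffices simultaneously for all these tests; this is a finite but slightly tedious verification that I would carry out by tracking the arithmetic through the recursion of Algorithm \textbf{RCVF2}.
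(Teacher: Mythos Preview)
Your proposal is correct and follows exactly the route the paper intends: the paper gives no explicit proof of Theorem~\ref{propMCHV} but presents it as ``a result that precises the output of Algorithm~\textbf{RCVF3}'', which is precisely Algorithm~\textbf{RCVF2} followed by Theorem~\ref{thValThom} via Remark~\ref{remRCVF3} --- the very reduction you carry out. Your write-up is in fact considerably more detailed than anything in the paper, including the explicit bookkeeping for the bound on $N$; one small imprecision is that in your first paragraph $v(F_j(x))$ is only \emph{piecewise} affine in $v(t)$ on each Thom interval (a $\min$ of affines, as in Theorem~\ref{thValThom}), not a single affine function, but you correctly account for the resulting breakpoints in the second paragraph.
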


\section{Quantifier elimination algorithms}
\label{sec qela}

\subsection{Parametrized computations}
\label{subsec.paracomp}

Algorithms \textbf{RCVF2} and \textbf{RCVF3} are uniform:
they can be run when
coefficients in the initial data are polynomials in other
variables which are called \emph{parameters}
(instead of being in the base field).

A case by case discussion appears,
and the straight-line algorithm is
replaced by a branching one.

We describe this situation as a \emph{parametrized algorithm} dealing
with parametrized univariate polynomials.

\begin{theorem}
\label{propCHV2} \emph{(parametrized version of Theorem
\ref{propCHV})}
Let $\KVP$ be an \ovsf of a \rcvf $\RVP$.
Let $L = (F_1, \ldots, F_k)$ be a list of parametrized univariate
polynomials of degrees  $d_1, \ldots, d_k$ in some variable $X$.
Let us run the algorithm  \textbf{RCVF2}
and let us open two branches in the
computation any time we have to know if a given element is zero
or nonzero when computing a remainder.
Moreover, replace remainders by pseudoremainders in order to
avoid denominators.

Consider the family $(c_j)$ of all ``constants" in all $L'$'s that
appear at the leaves of the tree (these constants are
$\K$--polynomials in the parameters).

Finally consider that the computed valuations $v(x_{k+1}-x_k)$
and  $v(P_j(x_k))$  at any leave of the tree are given as
  \qsl functions of the ``variables" $v(c_j)$'s.

Then this global parametrized algorithm is finite and therefore
gives a finite number of possibilities for its output:
the complete tableau of vsc's  for $L$.

More precisely when the signs of the ``constants" $c_j$'s are known,
the complete tableau of signs is known and all the valuations
$v(x_{k+1}-x_k)$ and  $v(P_j(x_k))$ are given as explicit
  \qsl functions in the ``variables" $v(c_j)$'s.
\end{theorem}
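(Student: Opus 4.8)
The plan is to obtain this statement exactly the way Theorem \ref{propCHV} was obtained, namely by a line-by-line inspection of Algorithm \textbf{RCVF2}, the only new feature being that the scalar zero-tests of that algorithm now become branchings. First I would check that the parametrized computation tree is finite. The family $L'$ is built from $L$ by the two operations $P\mapsto P'$ and $(P,Q)\mapsto\mathrm{pRem}(P,Q)$ ($\deg P\ge\deg Q\ge 1$), both of which strictly lower the degree; hence along any path of the computation only finitely many polynomials are produced, all of degree $\le\max_i d_i$, the size of each $L'$ is bounded in terms of $k$ and the $d_i$'s alone, and so is the number of zero/nonzero tests performed on leading coefficients while forming pseudoremainders. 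Thus the tree is finitely branching and of uniformly bounded depth, hence finite: there are finitely many leaves, each carrying a frozen \emph{combinatorial type}, namely the list of degrees of the polynomials in its $L'$ together with the diagram of operations producing them.

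Next I would work at a fixed leaf, where the degrees and the operation diagram are determined; let $(c_j)$ be the constants, i.e.\ the members of this $L'$ that are constant in $X$ (these are $\K$-polynomials in the parameters). Since $L'$ is closed under derivation, for $P\in L'$ of degree $d'$ the polynomial $P^{(d')}$ is a nonzero constant of $L'$, so both $\sign(\mathrm{lc}(P))$ and $v(\mathrm{lc}(P))$ are among the signs $\sign(c_j)$ and the valuations $v(c_j)$. Splitting once more on the finitely many possibilities for the signs $\sign(c_j)\in\{-1,0,+1\}$, Proposition \ref{propCH} gives the complete tableau of signs, its inputs being precisely the degrees, the operation diagram and those signs. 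On top of this tableau I would re-run the valuative part of \textbf{RCVF2} and verify, by induction on the index $m$ of the polynomial being treated, that all valuations recorded in $V_m$ are $\Q$-semilinear functions of the $v(c_j)$'s. Indeed, at a root $a=x_{m,k}$ of $Q=P_\ell\in L'_m$ one has $v(P(a))=v(R(a))$ up to addition of an integer multiple of $v(\mathrm{lc}(Q))$ (the correction coming from the pseudoremainder), with $R=\mathrm{Rem}(P,Q)\in L'_m$, and both $v(\mathrm{lc}(Q))$ (some $v(c_j)$, by the previous remark) and $v(R(a))$ are already available; and at a root $x=a+t_1(b-a)$ inside an interval $\,]a,b[\,$ of $T_m$ the GTF of Proposition \ref{propGTF}, as used in \textbf{RCVF2}, gives
$$\tau_1=v(t_1)=\max\!\left(\nu-\mu_1,\ \frac{\nu-\mu_2}{k_2},\ \ldots,\ \frac{\nu-\mu_d}{k_d}\right),\qquad \mu_i=\nu_i+i\delta,$$
where $\nu=v(P(a))$, $\nu_i=v(P^{[i]}(a_i))$ and $\delta=v(b-a)$ are, inductively, $\Q$-semilinear in the $v(c_j)$'s; hence so are $\tau_1$ and, symmetrically, $\tau_2$. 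Theorem \ref{thValThom} then writes every $v(P_j(x))$ as a $\min$ of $\Q$-linear combinations of the $\nu$'s, $\delta$'s and $\tau_1$ (or $\tau_2$), again $\Q$-semilinear; the final interval $\,]x_{m,r_m},+\infty[\,$ is handled the same way with the ordinary Taylor formula. Since each $\min$, $\max$ and $\Q$-linear combination occurring here is dictated only by the frozen combinatorial type together with the chosen signs $\sign(c_j)$, the resulting $\Q$-semilinear term is one and the same for all parameter specialisations compatible with that leaf and those signs, which is exactly the assertion.

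I expect the main effort to be bookkeeping rather than any new idea: one must check that replacing remainders by pseudoremainders throughout corrupts neither the sign data (a pseudoremainder differs from the remainder by a power of $\mathrm{lc}(Q)$, whose sign and whose parity are known once we have split on $\sign(c_j)$) nor the valuation data (that power only adds an integer multiple of $v(\mathrm{lc}(Q))$, which must be carried along to preserve $\Q$-semilinearity and to keep the final term a function of the discrete data alone); and one must confirm that every zero/nonzero test run by the algorithm is genuinely a test on a $\K$-polynomial in the parameters occurring among the $c_j$'s, so that the two-branch splitting is legitimate and exhaustive. Granting this routine verification, the theorem follows from the scalar Theorem \ref{propCHV} applied leaf by leaf.
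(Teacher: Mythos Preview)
Your proposal is correct and follows exactly the route the paper takes: the paper's own proof is the two-sentence remark that the parametrized Cohen--H\"ormander argument of Proposition~\ref{propCH} goes through unchanged, and that along each branch the proof of Theorem~\ref{propCHV} (i.e.\ the inspection of Algorithm~\textbf{RCVF2}) applies verbatim. You have simply spelled out the bookkeeping (finiteness of the tree, the pseudoremainder correction, the inductive $\Q$-semilinearity) that the paper leaves implicit.
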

\begin{proof}{Proof.}
The proof of Proposition \ref{propCH} (Cohen-H\"ormander algorithm) works as
well
in the parametrized case. In each branch so created, the proof of Theorem
\ref{propCHV} works as well.
\end{proof}

\begin{remarks}
\label{rempropCHV2}~

\noindent 1) An important case is obtained when all coefficients of the $F_i$'s are
independent parameters and
$\KVP=(\Q,\Q,\Q^{\geq 0})$.
This ``generic case"
gives the complete description of all situations occurring with a fixed number
of polynomials of known degrees.

\smallskip \noindent 2) Another interesting particular case is the following one, with
only one parameter subject to certain constraints.
We start with a list of polynomials $L=(F_1, \ldots, F_k)$  in 
$\K[Y]^n$, we get
an extended list $L'$ and the complete tableau of signs.
Let $a$ and $b$ be two
consecutive roots in this tableau.
Now we want to make computations with an element $x$ of the interval
$\,]a,b[\,$.
Consider $x$ as a parameter verifying some sign constraints, namely the Thom's
sign conditions that define $\,]a,b[\,$.
We add the
polynomial $Y-x$ to  $L$ and we run the parametrized version of \textbf{RCVF2}.
Only
one root is added: $x$. The new polynomials appearing are  only ``constants" of
the form $Q(x)$ (where $Q$ is in $L'$).
The process go on only trough one branch.
We get the following result:
the valuations $v(x-a)$ and $v(b-x)$ are given as \qsl
functions of some $v(Q(x))$'s.
 From this we also get a similar result concerning
$v(x-x_j)$ where $x_j$ is any root in the tableau.
Naturally, there is also a parametrized version for this result.
\end{remarks}

Similarly we have a parametrized version of Theorem
\ref{propMCHV}.
\begin{theorem}
\label{propMCHV2} \emph{(parametrized version of Theorem
\ref{propMCHV})} \\
Let $L = (F_1, \ldots, F_k)$ be a list of parametrized univariate
polynomials of degrees  $d_1, \ldots, d_k$ in some variable $X$.
Let $M$ be a positive integer.
Let us run the algorithm  \textbf{RCVF2}
and let us open two branches in the
computation any time we have to know if a given element is zero or
nonzero when computing a remainder.
Moreover, replace remainders by pseudoremainders in order to
avoid denominators.
Let us call $(c_j)$ the family of all
``constants" in all $L'$'s that
appear at the leaves of the tree (these constants are
$\K$--polynomials in the parameters).

Finally when applying Theorem \ref{thValThom} in order to get the
output of \textbf{RCVF3} from the one of \textbf{RCVF2},
we open three
branches any time we have to know the sign of some $\Z$-linear
combination of $v(c_j)$'s.

Then this global parametrized algorithm is finite and
therefore gives a finite number of possibilities for its output:
the $M$-complete tableau of vsc's  for $L$.

Moreover, these outputs depend on
the following data:
\begin{itemize}
\item the signs of the ``constants" $c_j$'s,
\item the sign test inside a finite subset of the subgroup
generated by the $v(c_j)$'s; which are exactly divisibility tests
between monomials in the $c_j$'s).
\end{itemize}
\end{theorem}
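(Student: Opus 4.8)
The plan is to obtain Theorem \ref{propMCHV2} from Theorem \ref{propCHV2} in exactly the way Theorem \ref{propMCHV} is obtained from Theorem \ref{propCHV}, i.e.\ by a close inspection of Algorithm \textbf{RCVF3} carried out branch by branch. First I would invoke Theorem \ref{propCHV2}: the parametrized Algorithm \textbf{RCVF2}, with two branches opened at each zero/nonzero test in a pseudoremainder computation, is already a finite tree whose every leaf carries the complete tableau of signs of the extended list $L'$ together with all the valuations $v(x_{k+1}-x_k)$ and $v(P_j(x_k))$ written as explicit \qsl terms in the ``variables'' $v(c_j)$, the term used depending only on that tableau of signs. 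Onto each leaf I would graft Algorithm \textbf{RCVF3}. By Remark \ref{remRCVF3} and the proof of Theorem \ref{thVcoLine}, passing from the output of \textbf{RCVF2} to the $M$-complete tableau of vsc's means applying Theorem \ref{thValThom}: on each interval $\,]x_k,x_{k+1}[\,$ (and on each of the two unbounded ones) every $v(F_j(x))$ is, for the relevant canonical parameter $t$, a piecewise-linear increasing function of $v(t)$ with boundedly many pieces of integer slopes in $\{0,1,\ldots,\deg F_j\}$ (the slopes $k_j$ extracted from the GTF satisfy $1\le k_j\le j$), so every $\Z$-linear combination of the $v(F_j(x))$ with coefficients bounded by $M$ is again piecewise linear in $v(t)$. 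Locating its breakpoints and reading off its sign on each resulting \vci, and evaluating the \qsl terms coming from \textbf{RCVF2}, is done by resolving the $\min$'s and $\max$'s, and each such resolution reduces to deciding the sign of a single $\Z$-linear form in the $v(c_j)$; at each such decision one opens three branches ($<0$, $=0$, $>0$), as prescribed in the statement.

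Finiteness is then immediate: the \textbf{RCVF2} tree has finitely many leaves; at each leaf there are finitely many intervals, finitely many polynomials of $L'$, and finitely many coefficient tuples bounded by $M$, and for each the associated piecewise-linear function of $v(t)$ has boundedly many pieces with slopes in a set depending only on the degrees, so only finitely many sign tests of $\Z$-linear forms in the $v(c_j)$ are triggered — the coefficients $c_j$ themselves entering only through the already-fixed \qsl terms. The global parametrized tree is thus finite, and each of its leaves produces exactly one $M$-complete tableau of vsc's for $L$; running the non-parametrized Theorem \ref{propMCHV} in each branch shows this is indeed the correct output.

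To finish, I would check that the output depends only on the data claimed. A leaf of the global tree is determined by (i) the \textbf{RCVF2}-branch, i.e.\ the signs of the constants $c_j$ — which already fix the complete tableau of signs of $L'$, hence which \qsl terms describe all the valuations — and (ii) the outcomes of the finitely many three-way tests $\sign\!\big(\sum_j\ell_j v(c_j)\big)$. Writing such a combination as $v(M_+)-v(M_-)$ with $M_+=\prod_{\ell_j>0}c_j^{\ell_j}$ and $M_-=\prod_{\ell_j<0}c_j^{-\ell_j}$ (only nonzero $c_j$'s occur, so no $\infty-\infty$ arises), the computation recalled in the Example of the Introduction shows that this test is exactly the comparison of $v(M_+)$ with $v(M_-)$, i.e.\ the divisibility test between the two monomials $M_+$, $M_-$ in the $c_j$'s inside $\V$. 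Hence the outputs depend only on the $\sign(c_j)$'s and on a finite set of divisibility tests between monomials in the $c_j$'s. The only genuine obstacle — everything else being the same inspection as for Theorems \ref{propCHV2} and \ref{propMCHV} — is the bookkeeping behind the word ``finitely'': one must see that the set of $\Z$-linear forms whose sign is tested is bounded purely in terms of $M$ and the degrees $d_1,\ldots,d_k$, uniformly over the unknown parameter values. This follows from the bounds $1\le k_j\le j\le\deg F_j$ on the GTF coefficients in Theorem \ref{thValThom} together with the bound $M$ in Definition \ref{defGSC}, since the $c_j$'s enter the relevant linear forms only through fixed \qsl expressions.
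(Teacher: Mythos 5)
Your proposal is correct and follows exactly the route the paper intends: the paper itself offers no written proof of Theorem \ref{propMCHV2} beyond the phrase ``similarly we have a parametrized version,'' i.e.\ graft the passage from Theorem \ref{propCHV} to Theorem \ref{propMCHV} (via Theorem \ref{thValThom} and Remark \ref{remRCVF3}) onto each leaf of the parametrized tree of Theorem \ref{propCHV2}. Your additional bookkeeping --- the bound on the slopes $k_j$ from the GTF, the bound $M$ on the coefficients of the tested $\Z$-linear forms, and the translation of each three-way test into a divisibility test between the monomials $M_+$ and $M_-$ as in the Example of the Introduction --- correctly supplies the details the paper leaves implicit.
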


\begin{remark}
\label{rempropMCHV2}
Since the computation in the previous theorem is purely
formal, certain systems of conditions corresponding to the
data given by the two last items may be impossible.
If we want to know what are these impossible systems, we have to
use the quantifier elimination algorithm given
in Theorem \ref{corQEL}.
Nevertheless, one can verify that there is no circular argument.
\end{remark}

\subsection{Quantifier elimination}
\label{subsec.qela}

We now give some corollaries of previous computations for
quantifier elimination.
We recall that these results are well known, see e.g., \cite{CD}.

We consider the first order theory of
\rcvfs based on the language of ordered fields
$(0,1,+,-,\times,=,\leq)$ to which we add the
predicate $x\preceq y$.
So, all constants and variables represent elements in $\K$ (this
corresponds to our previously explained computability assumptions).


Here is a corollary of Theorem \ref{propMCHV2}.
\begin{theorem}
\label{thCHV}
Let $\Phi(\underline{a},\underline{x}) $
be a quantifier free formula in the first order theory of
\rcvfsz. We view the $a_i$'s as parameters and
the $x_j$'s as variables.
Then one can give a quantifier free formula $\Psi(\underline{a})$
such that the two formulae
$\;\exists \underline{x}\;\Phi(\underline{a},\underline{x})\;$ and
$\;\Psi(\underline{a})\; $
are equivalent in the formal theory.
(The terms appearing in the formulae $\Phi$ and $\Psi$ are
$\Z$--polynomials in the parameters, and, in the case of
$\Phi$, also in the variables.)
\end{theorem}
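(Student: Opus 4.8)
The strategy is a standard quantifier-elimination induction on the number of quantified variables, with the one-variable case reduced entirely to the parametrized Cohen–H\"ormander machinery already established in Theorem \ref{propMCHV2}. First I would observe that it suffices to eliminate a single existential quantifier: if we can pass from $\exists x_r\;\Phi(\underline a, x_1,\ldots,x_r)$ to an equivalent quantifier-free $\Phi'(\underline a, x_1,\ldots,x_{r-1})$, then iterating handles $\exists x_1\cdots\exists x_r$, and a universal quantifier is dispatched via $\forall x\,\Phi \equiv \neg\exists x\,\neg\Phi$ together with the fact that negations of quantifier-free formulae are again quantifier-free (the atomic predicates $P\ge 0$, $P\preceq Q$, $P=Q$ all have quantifier-free negations in this language, using trichotomy of $\le$ and the definition of $\preceq$). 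So the whole theorem reduces to the one-variable elimination step.

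\textbf{The one-variable step.} Fix the variable $x=x_r$ to be eliminated and regard $\underline a$ together with $x_1,\ldots,x_{r-1}$ as parameters. The formula $\Phi$ is a Boolean combination of atoms; each atom, after clearing denominators, is of the form $P(\underline a,\underline x,x)\ge 0$, $P(\underline a,\underline x,x)=0$, or $P\preceq Q$, i.e. (writing $G=Q-\text{something}$ as needed) essentially sign conditions and valuation-comparison conditions on a finite family $L=(F_1,\ldots,F_k)$ of polynomials in $x$ whose coefficients are $\Z$-polynomials in the parameters. Each $\preceq$-atom $F_i\preceq F_j$ is the sign condition $\sign\bigl(v(F_j)-v(F_i)\bigr)\ge 0$ among the relevant valuations, so it is covered by taking the family $(\ell_k)$ of $\Z^k$-vectors to include all differences of standard basis vectors — more precisely, choose $M$ large enough (depending only on the $\preceq$-atoms appearing in $\Phi$) so that every valuation-comparison needed is among those recorded in an $M$-complete tableau of vsc's. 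Now apply Theorem \ref{propMCHV2} to $L$: the parametrized algorithm produces finitely many leaves, each carrying an $M$-complete tableau of vsc's for $L$, and each leaf is selected by a conjunction of (i) sign conditions $\sign(c_j)=\sigma_j$ on the ``constants'' $c_j\in L'$ (which are $\Z$-polynomials in the parameters) and (ii) divisibility tests between monomials in the $c_j$'s, i.e. conditions of the form $\prod c_j^{a_j}\preceq \prod c_j^{b_j}$ — all of these being quantifier-free atoms in the parameters in our language.

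\textbf{Reading off $\Psi$.} Within a single leaf the tableau tells us, for $x$ ranging over each point $x_k$ of the tableau and over each of the finitely many \vcis into which the complementary intervals are subdivided (the $\J$, $\I$, $\J^+$, $\J^-$ pieces of Definition \ref{defMCTVSC}), the complete list of signs $\sign(F_j(x))$ and the complete list of truth values of the valuation-comparisons that occur in $\Phi$. Hence on each such piece the truth value of the quantifier-free matrix $\Phi(\underline a,\underline x,x)$ is a constant, computable from the tableau alone. Therefore $\exists x\,\Phi$ holds, in that leaf, if and only if at least one point or one \vci-piece of the tableau makes $\Phi$ true; this is a finite disjunction of ``leaf is active''. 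Taking the disjunction over all leaves and conjoining each with its selecting quantifier-free condition on the parameters yields the desired $\Psi(\underline a)$, which by construction is a quantifier-free formula whose terms are $\Z$-polynomials in the parameters. Soundness in the formal theory follows because every step — Theorem \ref{propMCHV2}, the equivalences in Notation \ref{notaVcoLine}, Theorem \ref{thValThom} — is an identity or equivalence provable in the first-order theory of real closed valued fields, not merely a semantic fact about a particular model.

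\textbf{The main obstacle.} The genuinely delicate point is the bookkeeping at the interface between the quantifier-free language (where $\preceq$ is a binary predicate on field elements) and the tableau data (which lives in the value group and records signs of $\Z$-linear combinations of valuations). One must check that (a) every $\preceq$-atom of $\Phi$, possibly after introducing auxiliary polynomials and after the substitutions $x=x_k+t(\cdots)$ of Theorem \ref{thValThom}, corresponds to a sign of some $\Z$-linear form in the recorded valuations $v(F_j(x))$, so that choosing $M$ (hence $N$) large enough really does make $\Phi$'s truth value constant on each tableau piece; and (b) conversely, the leaf-selecting divisibility conditions of Theorem \ref{propMCHV2}, which are $\preceq$-comparisons between monomials $\prod c_j^{a_j}$ in the parameters, are themselves expressible as $\preceq$-atoms in our language — this is immediate since $\prod c_j^{a_j}\preceq\prod c_j^{b_j}$ is literally such an atom once the $c_j$ are the given $\Z$-polynomials in $\underline a$. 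Granting this careful matching, the remaining work is routine assembly of a finite disjunction, and the proof is complete.
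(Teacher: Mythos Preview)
Your proposal is correct and follows exactly the approach of the paper, whose entire proof reads ``Use recursively Theorem \ref{propMCHV2} and eliminate the $x_j$'s one after the other.'' You have simply unpacked the details that the paper leaves implicit: the reduction to a single existential quantifier, the translation of $\preceq$-atoms into sign conditions on $\Z$-linear combinations of valuations so that an $M$-complete tableau captures the truth value of $\Phi$ on each piece, and the assembly of $\Psi$ as a disjunction over leaves conjoined with their selecting sign and divisibility conditions.
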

\begin{proof}{Proof.}
Use recursively Theorem \ref{propMCHV2} and eliminate the $x_j$'s
one after the other.
\end{proof}

We also get the following corollary.
\begin{theorem}
\label{corQEL} Let $\KVP$ be an \ovsf of a \rcvf
$\RVP$. Assume that the sign test and the divisibility
test are explicit inside $\KVP$. Then there is a uniform quantifier
elimination algorithm for the first
order theory of \rcvfs extending $\KVP$.
\end{theorem}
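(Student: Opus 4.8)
The plan is to obtain this statement as a routine bootstrapping of Theorem \ref{thCHV}, which already performs quantifier elimination for a single block of existential quantifiers, and then to observe that the extra explicitness hypothesis on $\KVP$ upgrades the elimination into a genuine decision procedure. So the only real ingredient is Theorem \ref{thCHV} (and behind it Theorem \ref{propMCHV2}); no new obstacle should appear, and the work is purely a matter of induction and bookkeeping.

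First I would recall two elementary facts about our language: the class of quantifier free formulae is closed under the Boolean connectives, and $\forall\underline{x}\,\Theta$ is logically equivalent to $\neg\,\exists\underline{x}\,\neg\Theta$. Given an arbitrary formula $\Xi(\underline{a})$ in the first order theory of \rcvfsz, I would put it in prenex form and argue by induction on the number of quantifier blocks. If the innermost block is $\exists\underline{x}\,\Phi$, then $\Phi$ is quantifier free, and all the remaining variables (whether quantified further out or free) may be treated as the parameter tuple $\underline{a}$ of Theorem \ref{thCHV}; that theorem produces a quantifier free $\Psi(\underline{a})$, with terms that are $\Z$-polynomials, equivalent to $\exists\underline{x}\,\Phi$ in the formal theory, so I substitute it and delete one quantifier block. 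If the innermost block is $\forall\underline{x}\,\Phi$, I apply the same step to $\neg\,\exists\underline{x}\,\neg\Phi$ and negate the output. Either way the number of quantifier blocks strictly decreases and the terms stay $\Z$-polynomial, so after finitely many steps I reach a quantifier free $\Psi_0(\underline{a})$ equivalent to $\Xi(\underline{a})$ in the formal theory, hence in every model, i.e.\ in every \rcvf extending $\KVP$. This is an algorithm in exactly the sense of Theorem \ref{thCHV} (ultimately Theorem \ref{propMCHV2}): it is uniform, and all branching is governed by sign tests and divisibility tests on $\Z$-polynomials in the constants.

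Finally, for a closed formula of the theory of \rcvfs extending $\KVP$ --- so the constants occurring are names for elements of $\S$ --- the resulting $\Psi_0$ has no variables, hence is a Boolean combination of closed atomic formulae of the two forms ``$P(\underline{c})\ge 0$'' and ``$Q(\underline{c})$ divides $P(\underline{c})$ in $\V$'', with $\underline{c}$ in $\S$ and $P,Q$ in $\Z[\underline{C}]$. By the standing computability assumptions on $\S$ together with the hypothesis that the sign test and the divisibility test are explicit inside $\KVP$, each such atom has a computable truth value, hence so does $\Psi_0$; combined with the quantifier elimination above this yields the claimed uniform quantifier elimination algorithm. The only points that need a line of care --- and none is hard --- are that Theorem \ref{thCHV} is legitimately invoked with arbitrarily many ``spectator'' variables present (its $\underline{a}$ is an arbitrary tuple of parameters), that negation and substitution preserve quantifier freeness and the $\Z$-polynomial nature of the terms, and that the recursion terminates because the quantifier block count decreases.
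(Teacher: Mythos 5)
Your proposal is correct and follows the paper's intended route: the paper states this result as an immediate corollary of Theorem \ref{thCHV} (itself obtained by recursive application of Theorem \ref{propMCHV2}), and your prenex-form induction on quantifier blocks, with universal quantifiers handled by double negation and the final closed atoms decided by the explicit sign and divisibility tests in $\S$, is exactly the bookkeeping the paper leaves implicit.
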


\subsection{An abstract form of quantifier elimination}
\label{subsec.abstractQela}

An abstract form of Theorem \ref{propMCHV2} is the following
theorem, that was given the first time by M.J. De la Puente in \cite{Pue}.

First, we need some definitions of the abstract objects.
\begin{definition}
\label{defSperv}
Let us denote the {\em real-valuative spectrum} of a
commutative ring $A$ by $\Sperv A$: an element of $\Sperv A$
is given by a ring
homomorphism $\varphi$ from $A$ to a \rcvf $K$, and two such
homomorphisms $\varphi$, $\varphi'$ define the same element of
$\Sperv A$ iff there exists an isomorphism of \ovfs
$\psi:R\rightarrow R'$ such that $\psi\circ\varphi=\varphi'$, where
$R$ and $R'$ are the \rcvfs generated by $\varphi(A)$ and
$\varphi'(A)$. Alternatively, an element of $\Sperv A$ is given by a prime
ideal $Q$ of $A$ and a structure of \ovf upon the fraction field of $A/Q$.
A {\em constructible subset} of $\Sperv A$ is by
definition a boolean combination of {\em elementary constructible subsets}
$U_x:=\left\{ \varphi\in \Sperv A\; :\; \varphi(x)>0 \right\}$ and
$V_{x,y}:=\left\{ \varphi\in \Sperv A\; :\; \varphi(x)\preceq
\varphi(y) \right\}$, where $x,y\in A$.
\end{definition}
\begin{theorem}
\label{thSperv}
The canonical mapping from $\Sperv A[X]$ to
$\Sperv A$
transforms any \vco subset into a \vco subset.
\end{theorem}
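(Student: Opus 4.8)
The plan is to reduce Theorem~\ref{thSperv} to the concrete quantifier-elimination result of Theorem~\ref{thCHV} (equivalently Theorem~\ref{corQEL}) via the usual model-theoretic dictionary between constructible subsets of the real-valuative spectrum and quantifier-free formulae in the first-order theory of \rcvfsz. First I would observe that, by Definition~\ref{defSperv}, a constructible subset $S$ of $\Sperv A[X]$ is a boolean combination of the elementary sets $U_p$ and $V_{p,q}$ for $p,q\in A[X]$; writing the generators $p,q$ in terms of the finitely many coefficients appearing in them, one sees that there is a quantifier-free formula $\Phi(\underline a,\underline x, x)$ in the language of \rcvfs (ordered fields plus the predicate $\preceq$), where the $a_i$ range over a finite generating set of coefficients and $x$ is a single extra variable, such that for every element $\varphi\in\Sperv A[X]$ — given by a homomorphism $A[X]\to K$ into a \rcvf, i.e. by a homomorphism $A\to K$ together with the image $\xi=\varphi(X)\in K$ — membership $\varphi\in S$ is equivalent to $K\models\Phi(\varphi(\underline a),\varphi(\underline x),\xi)$. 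Here I am using that the structure of \ovf on the residue/fraction field is exactly the data recorded by $\Sperv$, so that truth of an atomic formula in $K$ matches incidence with the sets $U$, $V$.

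Next I would identify the canonical map $\pi:\Sperv A[X]\to\Sperv A$: it simply forgets the value $\xi$ of $X$, i.e. precomposes $A[X]\to K$ with the inclusion $A\hookrightarrow A[X]$. Consequently an element $\psi\in\Sperv A$, given by $A\to K'$, lies in $\pi(S)$ iff there exists a \rcvf $K\supseteq K'$ (or, since \rcvf is a ``model-complete enough'' theory for the existential quantifier to be absolute, one may take $K=K'$ after passing to the real closure of the valued field generated by $\psi(A)$) and an element $\xi\in K$ with $K\models\Phi(\psi(\underline a),\psi(\underline x),\xi)$. That is, $\psi\in\pi(S)$ iff $K'\models\bigl(\exists x\,\Phi\bigr)(\psi(\underline a),\psi(\underline x))$, where one must be slightly careful that the existential quantifier is correctly interpreted — this is where one invokes that the theory of \rcvfs, in the given language, admits quantifier elimination, so that $\exists x\,\Phi$ is equivalent over the theory to a quantifier-free formula and its truth at $\psi$ therefore does not depend on the ambient \rcvf chosen.

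Now apply Theorem~\ref{thCHV} to $\Phi(\underline a,\underline x,x)$, treating $(\underline a,\underline x)$ as the parameters and $x$ as the single variable to be eliminated: one obtains a quantifier-free formula $\Psi(\underline a,\underline x)$, with terms that are $\Z$-polynomials in the parameters, equivalent in the formal theory to $\exists x\,\Phi$. Translating $\Psi$ back through the dictionary — each atom of $\Psi$ of the form ``$t\ge 0$'' or ``$t\preceq s$'' with $t,s\in\Z[\underline a,\underline x]$ corresponds, after re-expressing $\underline a,\underline x$ in terms of elements of $A$, to an elementary constructible set $U_{(\cdot)}$ or $V_{(\cdot),(\cdot)}$ in $\Sperv A$ — exhibits $\pi(S)=\{\psi:K'\models\Psi(\psi(\underline a),\psi(\underline x))\}$ as a boolean combination of elementary constructible subsets of $\Sperv A$, hence constructible. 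The main obstacle I anticipate is not the algebra but the bookkeeping of the dictionary: making precise that atomic truth in the \rcvf attached to a point of $\Sperv$ corresponds exactly to incidence with the $U$'s and $V$'s, and, relatedly, checking that the existential statement defining $\pi(S)$ is insensitive to the choice of ambient \rcvf (so that one really is entitled to replace $\exists x\,\Phi$ by its quantifier-free equivalent pointwise). Both of these are standard once one has quantifier elimination in hand, so the proof is genuinely a short corollary of Theorem~\ref{thCHV}; I would present it at that level of detail and refer to \cite{Pue} for the original treatment.
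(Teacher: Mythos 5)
Your proof is correct and follows essentially the same route as the paper: the paper's own proof rewrites a basic constructible subset of $\Sperv A[X]$ as a system of conditions on the coefficients of the defining polynomials and invokes Theorem~\ref{propMCHV2} directly, whereas you invoke Theorem~\ref{thCHV}, which is itself an immediate corollary of Theorem~\ref{propMCHV2}, so the underlying one-variable elimination is identical. Your explicit attention to the absoluteness of the existential quantifier (a point of $\Sperv A[X]$ a priori lives in an extension of the \rcvf attached to its image in $\Sperv A$, and one descends via the quantifier-free equivalent) is a detail the paper's proof passes over silently, but it does not change the substance of the argument.
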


\begin{proof}{Proof.}
A \vco subset in $\Sperv (B)$ is a finite union of basic
\vco subsets,
that are defined as
$$\begin{array}{l}
\left\{\varphi\in\Sperv (B)\;:\; \right.  \\
\;\left.\bigwedge_i \varphi(a_i)=0\land \bigwedge_j \varphi(b_j)>0\land
\bigwedge_k v(\varphi(c_k))=v(\varphi(d_k))\land
\bigwedge_\ell v(\varphi(e_\ell))>v(\varphi(f_\ell))
\right\}
\end{array}$$
where conjunctions are finite and all elements are in $B$.
Searching the canonical image of a basic constructible subset $S$
of $\Sperv A[X]$
(defined by elements $a_i,b_j, c_k,d_k,e_\ell,f_\ell$ in $A[X]$)
inside $\Sperv A$, is the same thing that analyzing the conditions
on the coefficients of the
polynomials $a_i,b_j, c_k,d_k,e_\ell,f_\ell$ allowing the existence of an
$x$ where the defining conditions of $S$ are verified. So Theorem
\ref{propMCHV2}
gives the answer.
\end{proof}

Another consequence of Theorem \ref{propMCHV2}
is a \emph{relativized version} of Theorem \ref{thSperv}.
This generalization is obtained by giving some \emph{constraints} on the
ring homomorphism $\varphi$ from $A$ to a \rcvf $K$.
We give e.g., a subring $B$ of
$A$, an ideal $M$ of $B$, a multiplicative monoid $S$ in $A$
and a semi ring $P$ in
$A$ ($P+P\subseteq P,\;P\times P\subseteq P$).
We want to allow only homomorphisms
$\phi$ (from $A$ or $A[X]$ to a \rcvfz)  verifying that $\phi(B)$ is in the
valuation ring, $\phi(M)$ is in the maximal ideal, elements of $\phi(S)$ are
nonzero and  elements of $\phi(P)$ are nonnegative.
If we write $C$ the constraints
$(B,M,S,P)$ and if we write $\Sperv(A,C)$ the part of $\Sperv A$ satisfying
the constraints, we get: the canonical mapping from $\Sperv (A[X],C)$ to
$\Sperv(A,C)$ transforms any \vco subset in a \vco subset.

In \cite{Pue} the relativized version is settled with one constraint $B$.

\section{Constructible subsets in the real valuative affine space}
\label{secConstSubSpace}
\subsection{Tarski-Seidenberg-Chevalley}
\label{subsecTSC}


We now give a geometric form for Theorems \ref{propMCHV2} and \ref{corQEL}.
\begin{theorem}
\label{thConsProj}
Let $\KVP$ be an \ovsf of a \rcvf $\RVP$.
Let $\pi$ the canonical projection from $\R^{n+r}$ onto  $\R^{n}$.
Let $S\subseteq \R^{n+r}$ be any \vco set  defined over $\KVP$.
Assume that the sign test and the divisibility test are explicit
inside the ring generated by the coefficients of the polynomials
that appear in the definition of $S$.
Then a description of the projection $\pi(S)\subseteq\R^{n}$
can be computed in a uniform way by an algorithm that uses only
rational computations, sign tests and divisibility tests.

In particular, the complexity of a description of $\pi(S)$
is explicitly bounded in terms of the complexity of a description of
$S$.
\end{theorem}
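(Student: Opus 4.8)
The plan is to reduce the projection of a general \vco set to an iterated single-variable projection and then apply the parametrized Cohen--Hörmander machinery, exactly as in the classical Tarski--Seidenberg argument. First I would observe that $\pi$ factors as a composite of $r$ canonical projections $\R^{n+i}\to\R^{n+i-1}$, so by induction it suffices to treat the case $r=1$: we have $S\subseteq\R^{n+1}$ described from a finite family of polynomials $(P_j)_{j}$ in $\K[X_1,\dots,X_n,Y]$ by a boolean combination of valued sign conditions, and we want a description of $\{\underline{x}\in\R^n:\exists y\;\underline{x}\star y\in S\}$. Viewing $X_1,\dots,X_n$ as parameters and $Y$ as the single variable, the $P_j$ become parametrized univariate polynomials in $Y$ of fixed degrees whose coefficients are $\K$-polynomials in $\underline{x}$; their valued sign conditions over $\R$ are precisely what an $M$-complete tableau of vsc's records, for $M$ large enough to cover all the $\Z$-linear forms $\ell$ occurring in the defining vsc's of $S$.

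Next I would invoke Theorem \ref{propMCHV2}: running the parametrized algorithm \textbf{RCVF2} (with pseudoremainders, branching on whether each constant vanishes) followed by the three-way branching coming from Theorem \ref{thValThom}, we obtain a finite tree whose leaves are indexed by the sign data of the constants $c_j$ (polynomials in $\underline{x}$ over $\K$) together with the divisibility data between monomials in the $c_j$'s. At each leaf the $M$-complete tableau of vsc's for the family is fully determined, and from it one reads off, for $y$ in each point or \vci of the tableau, exactly which valued sign conditions the $P_j(\underline{x},y)$ satisfy — hence whether the leaf contributes a $y$ realizing membership in $S$. Thus $\exists y\;(\underline{x}\star y\in S)$ becomes equivalent to a finite disjunction, over the ``good'' leaves, of the conjunction of the corresponding constant-sign conditions and divisibility conditions; each such condition is a sign test $P(\underline{x})\ge 0$ or a divisibility test $Q(\underline{x})\mid P(\underline{x})$ in $\Vrc$, so the resulting description of $\pi(S)$ is again a \vco set, and it has been produced by an algorithm using only rational computations, sign tests and divisibility tests inside the ring generated by the coefficients. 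Iterating over the $r$ variables, and noting that at each stage the new coefficients are $\Z$-polynomials in the previous ones, keeps everything inside the ring generated by the original coefficients of $S$, which is where we assumed the two tests are explicit.

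The only real work is bookkeeping rather than a conceptual obstacle: one must check that the branching tree of Theorem \ref{propMCHV2} is genuinely finite when the coefficients are indeterminate parameters (this is already asserted there), and that translating ``$\exists y$'' into a disjunction over leaves is legitimate even though some leaves may correspond to impossible sign/divisibility configurations — but including impossible disjuncts does no harm, since they simply define the empty set, and (cf.\ Remark \ref{rempropMCHV2}) there is no circularity because we are not required to detect them here. Finally, the complexity statement follows by tracking the size of the tree and of the polynomials $c_j$ as functions of the degrees and number of the $P_j$'s and the parameters $N,M$; since each elimination step multiplies these bounds by a function of the current data and there are $r$ steps, the bound on a description of $\pi(S)$ is an explicit (tower-type) function of the complexity of a description of $S$. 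The step I expect to require the most care is verifying that the composition of the $r$ elimination steps stays uniform, i.e.\ that the sign and divisibility tests at every stage can be phrased inside the single ring generated by the coefficients of $S$; this is where one uses that pseudoremainders and the \qsl functions of Theorem \ref{propCHV2} introduce only $\Z$-polynomial combinations of earlier coefficients.
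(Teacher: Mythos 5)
Your proposal is correct and follows essentially the route the paper intends: Theorem \ref{thConsProj} is presented there as the geometric form of Theorems \ref{propMCHV2} and \ref{corQEL}, whose proofs are exactly the iterated one-variable elimination via the parametrized $M$-complete tableau of vsc's that you describe. Your write-up in fact supplies more detail (the choice of $M$, the harmlessness of impossible leaves, the closure of the coefficient ring under the pseudoremainder computations) than the paper, which states the theorem without a separate proof.
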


Here \emph{rational computations} mean computations in the ring generated by
the coefficients of the polynomials occurring
in the description of $S$.
A \emph{description of $S$} is a quantifier free
formula in disjunctive
normal form describing $S$.
The \emph{complexity} of such a description of
$S$ can be defined as a $5$-tuple $(n,d,k,\ell,m)$ where $n$ is the
number of variables, $d$ is the maximum of the degrees, $k$ is the
number of polynomials, $\ell$ is the number of $\lor$
and $m$ is the
bound for the numbers of $\land$ inside a disjunct.

\begin{corollary}
\label{corQEL2} Let $\KVP$ be an \ovsf of a \rcvf
$\RVP$. Let $S\subseteq \R^n$ be a \vco set and let
$f:S\rightarrow \R^p$ be a  \vco map.
\begin{itemize}
\item  The interior and the adherence of $S$ inside $\R^n$
for the order topology are \vco sets.
\item  $f(S)\subseteq \R^p$ is a \vco set.
\item  Let $T$  be a \vco set containing $f(S)$ and let
$g:T\rightarrow \R^q$ be a  \vco map. Then $g\circ f$ is a \vco map.
\item  Let $T'\subseteq \R^p$  be a \vco set. Then $f^{-1}(T')\subseteq
\R^n$ is a \vco set.
\end{itemize}
\end{corollary}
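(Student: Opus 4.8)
The plan is to deduce every item of the corollary from Theorem~\ref{thConsProj} (equivalently Theorem~\ref{corQEL}): the projection of a \vco set is \vco. Besides that I will use only elementary closure properties that are immediate from Definition~\ref{defbasicsa}: \vco sets are stable under finite boolean operations, in particular complementation; if $S\subseteq\R^n$ and $T\subseteq\R^p$ are \vco then so is $S\times T=(S\times\R^p)\cap(\R^n\times T)$, since the polynomials defining $S$ (resp.\ $T$) may be read as polynomials in the first $n$ (resp.\ last $p$) coordinates of $\R^{n+p}$; the graph $G_f\subseteq\R^{n+p}$ of a \vco map $f:S\to\R^p$ is \vco by definition; and any coordinate projection $\R^N\to\R^k$ sends \vco sets to \vco sets, since after renaming variables in the defining formula it is the canonical projection of Theorem~\ref{thConsProj}.

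Granting this, the three statements about maps are one-liners, and I would dispatch them first. For the image, $f(S)=\pi(G_f)$ where $\pi:\R^{n+p}\to\R^p$ is the projection onto the last $p$ coordinates, so $f(S)$ is \vco. For the composition, if $T\supseteq f(S)$ is \vco and $g:T\to\R^q$ is a \vco map with graph $G_g\subseteq\R^{p+q}$, then the graph of $g\circ f$ is the projection of $(G_f\times\R^q)\cap(\R^n\times G_g)\subseteq\R^{n+p+q}$ onto the first $n$ and the last $q$ coordinates; the set being projected is an intersection of \vco sets, hence \vco, so $g\circ f$ is a \vco map. For the preimage, if $T'\subseteq\R^p$ is \vco then $f^{-1}(T')=\pi'\big(G_f\cap(\R^n\times T')\big)$, where $\pi':\R^{n+p}\to\R^n$ forgets the last $p$ coordinates; again this is a projection of an intersection of \vco sets.

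The item I expect to cost the real effort — and the only one that is not pure bookkeeping — is the one on interior and adherence, because topology has to be translated into a first-order formula and the quantifiers then removed by projection and complementation. Here I would use the remark recorded just after Definition~\ref{defbasicsa}, that the order topology and the valued topology on $\R^n$ coincide, so that a neighbourhood basis at $x$ is furnished by the boxes $B(x,\varepsilon)=\prod_{i=1}^n\,]x_i-\varepsilon,x_i+\varepsilon[\,$ with $0<\varepsilon\in\R$, and ``$y\in B(x,\varepsilon)$'' is the conjunction of sign conditions $\bigwedge_{i}(x_i-\varepsilon<y_i<x_i+\varepsilon)$, which defines a basic \vco subset of $\R^{1+n+n}$ in the variables $(\varepsilon,x,y)$. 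Then $x\in\overline{S}$ iff $\forall\varepsilon\,(\varepsilon>0\Rightarrow\exists y\,(y\in S\land y\in B(x,\varepsilon)))$. I would now strip the quantifiers from the outside in: the set $W=\{(\varepsilon,x,y):\varepsilon>0,\ y\in S,\ y\in B(x,\varepsilon)\}$ is an intersection of \vco sets, hence \vco; projecting out $y$ gives the \vco set $W_1=\{(\varepsilon,x):\exists y\ (\varepsilon,x,y)\in W\}$; intersecting the \vco set $\{(\varepsilon,x):\varepsilon>0\}$ with the complement of $W_1$ and projecting out $\varepsilon$ gives the \vco set $\{x:\exists\varepsilon\,(\varepsilon>0\land(\varepsilon,x)\notin W_1)\}$, which is precisely $\R^n\setminus\overline{S}$; complementing once more yields $\overline{S}$. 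The interior is then obtained either dually from $x\in\mathrm{int}(S)\iff\exists\varepsilon\,(\varepsilon>0\land\forall y\,(y\in B(x,\varepsilon)\Rightarrow y\in S))$ by the same alternation of projections and complements, or more cheaply from $\mathrm{int}(S)=\R^n\setminus\overline{\R^n\setminus S}$ once the closure case is settled. I do not anticipate any genuine difficulty beyond keeping track of which quantifier corresponds to which projection; all the real content sits in Theorem~\ref{thConsProj}.
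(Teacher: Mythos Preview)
Your proposal is correct and is exactly the intended argument: the paper states this result as a corollary of Theorem~\ref{thConsProj} (equivalently Theorem~\ref{corQEL}) with no proof at all, so the standard derivations you give---projecting the graph for the image, intersecting and projecting for composition and preimage, and expressing closure and interior by first-order formulae whose quantifiers are stripped by alternating projections and complements---are precisely what is being left to the reader.
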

\subsection{Stratifications and applications}
\label{subsecStrat}

We think that the results of this section could allow
to get most of the results
obtained by Frank Mausz in his Doctoral
dissertation \cite{Ma} with a different
approach.

\ms\textbf{{\L}ojaziewicz stratification \`a la Cohen-H\"ormander}

We recall here a result about stratifying families
(\cite{BCR} chapter~9).
\begin{definota}
\label{defRootfunc}
Consider a general monic polynomial of degree $d$ as a point of
$\R^d$. Let
$\sigma=(\sigma_1,\ldots,\sigma_d)\in \{-1,+1\}^d$. Let
$$U_\sigma=\left\{P\in\R^d\; :\; \exists x\in\R \;
\bigg( P(x)=0 \; \land\;  \Land\nolimits_{i=1}^d
\sign(P^{(i)}(x))=\sigma_i\bigg) \right\}.$$
It is easily seen that $U_\sigma$ is a connected open semialgebraic
subset of $\R^d$ (see e.g., \cite{GLM}) and that
$$ \overline{U_\sigma}=\left\{P\in\R^d\; :\; \exists x\in\R \;
\bigg( P(x)=0 \; \land\;  \Land\nolimits_{i=1}^d
\sign(P^{(i)}(x))\in\{\sigma_i,0\}\bigg) \right\}.
$$
For $P\in U_\sigma$ we call $\rho_\sigma(P)$ the zero
which is coded \`a la
Thom by $(P,\sigma)$. Then $P\mapsto\rho_\sigma(P)$
is Nash on $U_\sigma$ and admits a continuous semialgebraic extension
on $\overline{U_\sigma}$, that we note also by
$\rho_\sigma$. Such a function will be called a {\em Thom's root
function}, or simply a {\em root function.}

More generally, if $\varphi:\R^{k-1}\rightarrow \R^d$ is a polynomial
function, we can consider $\rho_\sigma\circ \varphi $ as defined over
$\varphi^{-1}(\overline{U_\sigma})$. We also call such a function a
\emph{root function}. This function is Nash over
$\varphi^{-1}(U_\sigma)$.
If $f(x_1,\ldots,x_{k})=\varphi(x_1,\ldots,x_{k-1})(x_k)$ is the
corresponding monic polynomial in $k$ variables, we denote
  $\rho_\sigma\circ \varphi $ by $\rho_\sigma(f)$.

Finally if a polynomial
$g\in\K[x_1,\ldots,x_{k}]=\K[x_1,\ldots,x_{k-1}][x_k]$ has a
leading coefficient w.r.t.\  $x_k$ which is a nonzero element $c$ of
$\K$, we say that {\em $g$ is quasi monic in $x_k$}, and we let
  $\rho_\sigma(g)=\rho_\sigma(g/c).$
\end{definota}
For more details about root functions see \cite{GLM}.

\begin{theorem} (\cite{BCR} chap. 9)
\label{thStra} Let $(\K,\P)$ be an ordered subfield of a \rcf
$(\R,\P_\R)$. Let $g_1,\ldots,g_s$ be nonzero polynomials in $\Kx$.
After a suitable linear change of variables
there exists a family of polynomials
$$(f_{i,j})_{i=1,\ldots,n;j=1,\ldots,\ell_i}$$ 
with the following
properties (we will continue denoting the new variables by $x_i$).
\begin{itemize}
\item [$(1)$] First we have
\begin{itemize}
\item  $(g_1,\ldots,g_s)\subseteq (f_{n,j})_{j=1,\ldots,\ell_n}$
\item Each $f_{k,j}$ is a nonzero polynomial
in $\K[x_1,\ldots,x_k]$ which is quasimonic in $x_k$.
\item  For each index $k$ the family
$(f_{k,j})_{j=1,\ldots,\ell_k}$ is stable under derivation w.r.t.\  $x_k$
(excluding the zero derivative).
\end{itemize}
\item [$(2)$] Let us denote
$I_k=\{(i,j)\; :\; i=1,\ldots,k;j=1,\ldots,\ell_i \} $.
Call $\cC_k$ the family of nonempty semialgebraic
subsets of $\R^k$ that can be defined as some
$$
C_{\varepsilon}=\left\{(\xi_1,\ldots,\xi_k)\in\R^k\; ;\;
\Land\nolimits_{(i,j)\in I_k}\;
\sign(f_{i,j}(\xi_1,\ldots,\xi_k))=
\epsilon_{i,j} \right\}\neq \emptyset 
$$
(where $\varepsilon=(\epsilon_{i,j})_{(i,j)\in I_k}$
is any family in
$\left\{-1,0,+1\right\}$). It is clear that the $C_{\varepsilon}$'s in
$\cC_k$ give a partition of $\R^k$. We have
\begin{itemize}
\item [$(a)$] The canonical projection $\pi_k(C_{\varepsilon})$
of any element $C_{\varepsilon}\in\cC_k$ on $\R^{k-1}$ is an element
of $\cC_{k-1}$: it is obtained
as $C_{\varepsilon'}$ where $\varepsilon'$ is the restriction of
the family $\varepsilon$ to $I_{k-1}$.
\item [$(b)$] The adherence $\overline{C_{\varepsilon}}$ of
$C_{\varepsilon}$ (recall we assume $C_{\varepsilon}\neq \emptyset $)
is a union of elements of $\cC_k$, it is obtained by relaxing strict
inequalities in the definition of $C_{\varepsilon}$.
\item [$(c)$] If in the definition of $C_{\varepsilon}\in\cC_k$
there is one equality $f_{k,i}(\xi_1,\ldots,\xi_k)=0$
then $C_{\varepsilon}$ is the graph of a root function
$\rho_\sigma(f_{k,j})$  (here $f_{k,j}$ is seen as a polynomial
in $x_k$, it is equal to $f_{k,i}$ or to some
  $f_{k,i}^{(\ell)}$  and $\sigma$ is extracted from $\varepsilon$)
which is Nash over  $\pi_k(C_{\varepsilon})$. Moreover,
$\rho_\sigma(f_{k,i})$ is defined over
$\overline{\pi_k(C_{\varepsilon})}$
and the graph of this root function is $\overline{C_{\varepsilon}}$.
\item [$(d)$] Call $\pi_{n,k}$ the canonical projection
$\R^n\rightarrow \R^k$. Let $E$ be a $k$ dimensional semialgebraic
subset of $\R^n$ defined from the polynomials $g_1,\ldots,g_s$. Then
for any $C_{\varepsilon}\in\cC_n$ which is contained in $E$,
$\pi_{n,k}$ maps homeomorphically ${\overline{C_{\varepsilon}}}$  on
its image.
\end{itemize}
\end{itemize}
\end{theorem}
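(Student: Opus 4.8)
The plan is to build the family $(f_{i,j})$ as a Cohen-H\"ormander cylindrical decomposition, constructed by a downward recursion on the number of variables after a preliminary generic linear change of coordinates; properties $(1)$ and $(a)$--$(d)$ will then be read off from the construction, most of them by the same parametrized one-variable reasoning that underlies Proposition \ref{propCH}.

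First I would fix the combinatorial shape of the construction before choosing coordinates. At level $n$, take $(g_1,\dots,g_s)$ and close it under $\partial/\partial x_n$; call the result $(f_{n,j})_j$. Given the level-$k$ family $(f_{k,j})_j$, form all the standard subresultant-based projection polynomials with respect to $x_k$ of the pairs $(f_{k,j},f_{k,j'})$ --- resultants, discriminants (these are automatically present, the family being closed under $\partial/\partial x_k$, as $\mathrm{Res}_{x_k}(f,\partial f/\partial x_k)$), and principal subresultant coefficients --- discard the zero ones, and close the resulting finite set of elements of $\K[x_1,\dots,x_{k-1}]$ under $\partial/\partial x_{k-1}$ to get the level-$(k-1)$ family. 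The recursion has depth $n$ and each step adjoins finitely many polynomials, so the total family $L'$ is finite, and its ``pattern'' (which resultant or derivative of which) does not depend on the coordinates. Each polynomial of $L'$ has a well-defined total degree, so a \emph{generic} linear change of variables makes every one of them quasimonic in its top variable: for a single nonzero polynomial of degree $d$ the coefficient of $x_k^d$ after the change is a nonzero polynomial in the matrix entries, so the common non-vanishing locus of the finitely many polynomials of $L'$ is a nonempty Zariski-open subset of $\mathrm{GL}_n$, which has a $\K$-point because $\K$ is infinite. This same genericity will be used for $(d)$. With this choice of coordinates, property $(1)$ holds by construction.

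For $(a)$--$(c)$ I would argue by induction on $k$, the base case being the one-variable Cohen-H\"ormander algorithm (Proposition \ref{propCH}); in the inductive step the key fact is that each $C_{\varepsilon'}\in\cC_{k-1}$ is connected. Over such a $C_{\varepsilon'}$ all $f_{i,j}$ with $i\le k-1$ have constant sign, so in particular every discriminant, resultant and principal subresultant coefficient (with respect to $x_k$) of the $f_{k,\cdot}$'s has constant sign there; running the parametrized Cohen-H\"ormander argument fibrewise then shows that the complete tableau of signs of the univariate family $\bigl(f_{k,j}(\xi',\,\cdot\,)\bigr)_j$ --- its roots, their Thom codes (via the family's derivatives, so that multiple roots cause no trouble), and the sign of each member at each root and in each interval --- is the same for every $\xi'\in C_{\varepsilon'}$. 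Hence the set of level-$k$ sign patterns realized above $\xi'$ is independent of $\xi'$, which gives $\pi_k(C_\varepsilon)=C_{\varepsilon'}$ with $\varepsilon'=\varepsilon|_{I_{k-1}}$, that is $(a)$, and also shows that each cell is either an open band between two consecutive root functions or the graph of one. If the definition of $C_\varepsilon$ contains an equality $f_{k,i}(\xi)=0$ then, the family being $\partial/\partial x_k$-closed, $\varepsilon$ prescribes the signs of all $x_k$-derivatives of $f_{k,i}$ at $\xi$, i.e.\ a Thom code $(f_{k,j},\sigma)$ with $f_{k,j}$ equal to $f_{k,i}$ or one of its derivatives; uniqueness of Thom-coded roots then identifies $C_\varepsilon$ with the graph of $\rho_\sigma(f_{k,j})$ over $\pi_k(C_\varepsilon)$, Nash there and continuously extendable over $\overline{\pi_k(C_\varepsilon)}$ by Definition and notation~\ref{defRootfunc} and \cite{GLM}; this is $(c)$, and relaxing strict inequalities while using the same continuous extensions fibrewise gives the description of $\overline{C_\varepsilon}$ in $(b)$.

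The delicate point is $(d)$, and it is here that the genericity of the linear change matters beyond mere quasimonicity. Each cell of $\cC_n$ has a ``type'': at every level $i$ the coordinate $x_i$ is either pinned to a root function (an equality level) or varies in an open band, and $\pi_{n,k}$ is injective on $\overline{C_\varepsilon}$ precisely when levels $k+1,\dots,n$ are all equality levels, in which case continuity of the root functions makes $\pi_{n,k}$ a homeomorphism of $\overline{C_\varepsilon}$ onto its image. So $(d)$ reduces to the assertion that, for a sufficiently generic linear change, no $k$-dimensional cell contained in a $k$-dimensional $E$ (which, being defined from the $g_i$'s, is a union of cells) can have a band level above $x_k$ --- equivalently, that such an $E$ has no vertical directions for the projection $\R^n\to\R^k$. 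This is a Noether-position type statement; I would verify it by a dimension count together with the genericity arising from the choice above, and otherwise refer to the detailed treatment in \cite{BCR} chapter~9, where finiteness of $L'$ and this bookkeeping are carried out in full. I expect this last step to be the main obstacle; the remaining items are the parametrized form of Proposition \ref{propCH} combined with the elementary properties of Thom root functions recalled in Definition and notation~\ref{defRootfunc}.
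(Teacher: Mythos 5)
The paper does not actually prove Theorem \ref{thStra}: it is recalled verbatim from \cite{BCR} chapter~9, and the only thing the authors add is the description (after Definition \ref{defThomStra}) of one particular way of building the stratifying family, namely by iterating the Cohen--H\"ormander algorithm and taking its ``constants'' (iterated pseudoremainders) as the projection to the next level down, interleaved with a linear change of variables at each level. Your sketch follows the same overall route (cylindrical decomposition, closure under $\partial/\partial x_k$, induction on levels using constancy of the one-variable tableau of signs over each connected cell of $\cC_{k-1}$), and for items $(1)$ and $(a)$--$(c)$ it is essentially sound. Two remarks on the differences: you use subresultant-based projection polynomials rather than the Cohen--H\"ormander constants; both work for Theorem \ref{thStra} itself, but the paper's later results (Theorems \ref{propMCHV2} and \ref{thCellDec1}) are applied recursively precisely to the CH constants, so the specific construction matters downstream. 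Also, your plan to fix the combinatorial pattern of $L'$ \emph{before} choosing coordinates has a mild chicken-and-egg issue (which subresultants you form depends on degrees in $x_k$, which depend on the coordinates); the paper's interleaved construction -- change variables at level $k$, then compute the level-$(k-1)$ polynomials -- avoids this, and your version needs the extra remark that generically $\deg_{x_k}f=\deg f$ so the pattern stabilizes.

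The genuine gap is in $(d)$. You correctly reduce it to the statement that, after a suitable linear change, no cell contained in a $k$-dimensional set $E$ defined from the $g_i$'s has a ``band'' level above $x_k$, but you then only gesture at ``a dimension count together with the genericity arising from the choice above'' and refer back to \cite{BCR}. This does not follow from the genericity used for quasimonicity: quasimonicity is a condition on the leading forms of the polynomials of $L'$, whereas $(d)$ is a good-position statement about the $k$-dimensional semialgebraic set $E$ itself (roughly, that the chosen projection $\R^n\to\R^k$ restricted to $E$ is finite-to-one, so that no positive-dimensional piece of $E$ collapses), and establishing that a generic linear change achieves this is the substantial part of \cite{BCR} chapter~9. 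As written, your argument for $(d)$ is an appeal to the reference rather than a proof; since the paper itself does exactly the same, this is consistent with the paper's treatment, but it should be flagged as the one point where your proposal does not stand on its own.
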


\begin{definition}
\label{defThomStra} Such a change of variables together with
such a family $(f_{i,j})
$  will be
called a {\em stratification} for
  $(g_1,\ldots,g_s)$ and for any semialgebraic subset of $\R^n$ defined
from this family. The family
$(f_{i,j})_{i=1,\ldots,n;j=1,\ldots,\ell_i}$  will be called a
{\em stratifying family} for the initial family
  $(g_1,\ldots,g_s)$. The semialgebraic subsets $C_{\varepsilon}$ are
called the {\em strata of the stratification}.
\end{definition}

We shall precisely consider the following way of constructing
a stratifying family,  \`a la Cohen-H\"ormander (it is the one
suggested in \cite{BCR}.)
First we make a linear change of variables in order to make
$g_1,\ldots,g_s$ quasi monic in the new variable $x_n$.
We add all the derivatives of each $g_i$ w.r.t.\  $x_n$. This gives us
the family $(f_{n,j})_{j=1,\ldots,\ell_n}$.

We apply Cohen-Hormander's algorithm to this family and we call
$h_1,\ldots,h_\ell$ the ``constants" given by this algorithm
(these constants are polynomials in $(x_1,\ldots,\alb x_{n-1})$).

We make a new linear change of variables
on $(x_1,\ldots,x_{n-1})$  in order to make
$h_1,\ldots,h_\ell$ quasi monic in the new variable $x_{n-1}$.
We make the same linear change of variables inside
$(f_{n,j})_{j=1,\ldots,\ell_n}$: this family remains quasimonic in
$x_n$ and stable under derivation w.r.t.\  $x_n$, and $h_1,\ldots,h_\ell$
remain the ``constants" given by the Cohen-Hormander's algorithm when
applied to this family.

We add all the derivatives of each $h_i$ w.r.t.\  $x_{n-1}$.
This gives
us the family $(f_{n-1,j})_{j=1,\ldots,\ell_{n-1}}$. And so on.

With this kind of stratifying family, we can apply recursively
Theorem \ref{propMCHV2}.
So we get a precise description of the variation
of the valuations $v(f_{k,j}(x_1,\ldots,x_k))$ when
  $(x_1,\ldots,x_k)\in C_\varepsilon$ for any $k$ and any
$C_\varepsilon\in\cC_k$.
Let us see an example.
\begin{example}
\label{exa5}
Assume $n=3$. Consider a cell $C\in\cC_3$.
Assume that $C''=\pi_{3,1}(C)$ is an interval $\,]a,b[\,$, that
$C'=\pi_{3,2}(C)$ is the graph of a root function
$h_1=\rho_{\sigma}(f_{{2},1})$  defined on $[a,b]$,
and that $C$  is the part of
$C'\times \R$ between two root functions
$h_2=\rho_{\sigma'}(f_{{3},1})$  and
$h_3=\rho_{\sigma''}(f_{{3},2})$, so
$$\begin{array}{rcl}
C&=   & \left\{(x,y,z)\; :\; a<x<b,\; y=h_1(x),\;
h_2(x,y)<z<h_3(x,y) \right\} \\
& =  &  \left\{(x,y,z)\; :\; a<x<b,\; y=h_1(x),\;
h'_2(x)<z<h'_3(x) \right\}.
\end{array}$$
We consider for $(x,y,z)\in C$, the parameters $t=(x-a)/(b-x)$,
$\tau=v(t)$, $t'=(z-h'_2(x))/(h'_3(x)-z)$ and
$\tau'=v(t')$. We get:
\begin{itemize}
\item The map  $h:(t,t')\mapsto (x,y,z)\in C$ is a
Nash isomorphism from $(\R^+)^2$ onto $C$.
\item For any $f_{k,j}$ in the stratifying family
$v(f_{k,j}(x,y,z))=\varphi_{k,j}(\tau,\tau')$ is a \qsl
function of $\tau,\tau'$
(here we use recursively Theorem \ref{propMCHV2}).
\item So, if we look at $C\, \cap\, S$ where $S$ is any
\vco subset described from the $f_{k,j}$'s, we find that
$C\, \cap\, S$ is a finite union of sets $h(L_i)$ where each
$L_i$ is defined as
$$ \left\{(t,t')\in (\R^+)^2\; :\; \Land\nolimits_\ell
a_\ell(\tau,\tau')=\alpha_\ell
\; \land\; \Land\nolimits_m  b_m(\tau,\tau')>\beta_m\right\}
$$
where $a_\ell$'s and $b_m$'s are $\Z$-linear forms
and  $\alpha_\ell,\;\beta_m\in \GKd$.
\item Now we should like to have some rational expression of
$\tau$ and $\tau'$ that uses only polynomials in $(x,y,z)$. This is
possible in the following way, as in Remark \ref{rempropCHV2}.
Consider that the formal variables are
$X,Y,Z$ and that $x,y,z$ are three parameters. Add to the list $g_i$
the three polynomials $X-x,\; Y-y,\; Z-z$ and reconstruct the
stratification, using the information that $(x,y,z)$ is in the
semialgebraic set $C$. You get that \emph{$\tau$ and $\tau'$
are fixed \qsl functions in the $v(c_j)$'s and in some $v(F_j(x,y,z))$'s}:
the $c_j$'s are the old constants, and the
$F_j(x,y,z)$ are the new ``constants" that are constructed by the
algorithm ($F_j(x,y,z)\in\K[x,y,z]$).
\end{itemize}
\end{example}


The following ``cell decomposition theorem"
is merely the generalization of what we
have seen on this example. It is obtained by applying Theorem
\ref{thValThom} to a stratification \`a la Cohen-H\"ormander.
The last assertion is obtained as in Remark \ref{rempropCHV2}.

\begin{theorem}
\label{thCellDec1} {\em (Cell decomposition theorem)}
Let $\KVP$ be an \ovsf of a \rcvf $\RVP$.
  Let $g_1,\ldots,g_s$ be nonzero polynomials in $\Kx$.
Consider a linear change of variables together with
  a family $(f_{i,j})_{i=1,\ldots,n;j=1,\ldots,\ell_i}$  that give a
stratification for $(g_1,\ldots,g_s)$. Assume that this stratification
is constructed \`a la Cohen-H\"ormander, as explained above (after
Definition \ref{defThomStra}). Consider any $k$-dimensional stratum
$C_\varepsilon$
corresponding to this stratification (see Theorem \ref{thStra}).
Then there is a Nash isomorphism
$$h:(\R^+)^k\longrightarrow C_\varepsilon,\; \;\;
(t_1,\ldots,t_k)\longmapsto h(t_1,\ldots,t_k)$$
with the following property.

If  $S$ is any \vco subset described from $g_1,\ldots,g_s$, then
  $S\, \cap\, C_\varepsilon$ is a finite union of cells $h(L_i)$, where each
$L_i$  can be defined as
$$ \left\{(t_1,\ldots,t_k)\in (\R^+)^k\; :\; \Land\nolimits_\ell
a_\ell(\tau)=\alpha_\ell
\; \land\; \Land\nolimits_m  b_m(\tau)>\beta_m\right\}
$$
where $\tau=(\tau_1,\ldots,\tau_k)=(v(t_1),\ldots,v(t_k)$,  the $a_\ell$'s and
$b_m$'s
are $\Z$-linear forms w.r.t.\
$\tau$,
and  $\alpha_\ell,\;\beta_m\in \GKd$.

Moreover, each $\tau_i$ is a \qsl function in some
$v(F_j(x_1,\ldots,x_n))$'s (with $F_j$'s explicitly computable elements of
$\Kx$).
\end{theorem}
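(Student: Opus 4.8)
The plan is to follow closely the pattern of Example~\ref{exa5}, organising the argument as an induction on the number of variables that mirrors the layered construction of a stratification \`a la Cohen-H\"ormander recalled after Definition~\ref{defThomStra}. Fix a $k$-dimensional stratum $C_\varepsilon\in\cC_n$. Among the coordinates $x_1,\ldots,x_n$ there are exactly $n-k$ that are \emph{bound} --- those $x_m$ for which the definition of $C_\varepsilon$ contains an equality $f_{m,i}=0$ --- and $k$ that are \emph{free}. First I would build the Nash isomorphism $h$ by running through $m=1,\ldots,n$ in order: if $x_m$ is bound, put $x_m=\rho_\sigma(f_{m,i})(x_1,\ldots,x_{m-1})$, a root function which by Theorem~\ref{thStra}$(c)$ is Nash over the (already parametrised) projection of $\overline{C_\varepsilon}$ to the first $m-1$ coordinates; if $x_m$ is free, its fibre over $(x_1,\ldots,x_{m-1})$ is a single interval whose endpoints are two consecutive roots $a,b$ of $\prod_j f_{m,j}(x_1,\ldots,x_{m-1},\,\cdot\,)$ (or $\pm\infty$), and I introduce a fresh parameter $t_\ell\in\R^+$ and set $x_m$ equal to the corresponding Nash parametrisation of that interval --- additive for a half-line, the fractional $x_m=(a+t_\ell b)/(1+t_\ell)$ for a bounded interval as in Example~\ref{exa5}, $t_\ell-1/t_\ell$ for the whole line --- each having a Nash inverse ($t_\ell=(x_m-a)/(b-x_m)$, etc.). Composing these $n$ Nash maps yields a Nash isomorphism $h\colon(\R^+)^k\to C_\varepsilon$.

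The heart of the proof is the claim that, for every pair $(i,j)$, the valuation $v(f_{i,j}(h(t_1,\ldots,t_k)))$ is a \qsl function of $\tau=(v(t_1),\ldots,v(t_k))$, with all the ``constants'' occurring in it lying in $\GKd$. This is exactly the recursive consequence of Theorem~\ref{propMCHV2} noted after Definition~\ref{defThomStra}, and I would prove it by the same induction on $m$: suppose all valuations of members of the level-$<m$ family have been expressed as \qsl functions of the parameters $\tau_1,\ldots,\tau_\ell$ attached to the free coordinates among $x_1,\ldots,x_{m-1}$. Applying Theorem~\ref{thValThom} (and, for unbounded fibres, its analogue via the usual Taylor formula) to each $f_{m,j}(x_1,\ldots,x_{m-1},X)$ between the consecutive root functions $a,b$ bounding the fibre --- the hypothesis $f_{m,j}^{[r]}(a)f_{m,j}^{[r]}(b)\ge 0$ holding because the level-$m$ family is stable under derivation w.r.t.\ $x_m$ and $C_\varepsilon$ is a stratum --- one gets
$$v(f_{m,j}(h(t)))=\min(\nu_0,\ \nu_1+\delta+\tau_{\ell+1},\ \nu_2+2\delta+k_2\tau_{\ell+1},\ \ldots,\ \nu_d+d\delta+k_d\tau_{\ell+1})$$
(with $1\le k_r\le r$; here $\tau_{\ell+1}=v(t_{\ell+1})$ is the new parameter if $x_m$ is free, and is absent if $x_m$ is bound). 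Each coefficient is a valuation of a member of the level-$\le m$ stratifying family evaluated at the already parametrised point --- for $\delta=v(b-a)$ one uses in addition Theorem~\ref{propCHV}/\ref{propMCHV2}, which expresses a difference-of-consecutive-roots valuation as a \qsl function of the valuations of the Cohen-H\"ormander constants of level $m$, and these, after the successive linear changes of variables, are themselves members of the level-$(m-1)$ family by the very construction of the stratification. By the inductive hypothesis all $\nu_r$'s and $\delta$ are \qsl functions of $\tau_1,\ldots,\tau_\ell$, and since \qsl functions are closed under $\Z$-linear combinations, $\min$ and $\max$, substitution shows $v(f_{m,j}(h(t)))$ is \qsl in $\tau_1,\ldots,\tau_{\ell+1}$. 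This closes the induction.

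Given this, the cell-decomposition statement follows formally. A basic \vco set $S$ described from the $f_{i,j}$'s (hence from $g_1,\ldots,g_s$, which are among the $f_{n,j}$'s by Theorem~\ref{thStra}) is, on $C_\varepsilon$ expressed in the coordinates $t$, cut out by finitely many sign conditions on $\Z$-linear forms $\sum_{i,j}\lambda_{i,j}\,v(f_{i,j}(h(t)))$; each such form is \qsl in $\tau$, so on $(\R^+)^k$ the condition is equivalent to a finite disjunction of systems of $\Z$-linear equalities and strict inequalities in $\tau$ whose right-hand sides lie in $\GKd$ (these being the ``vertices'' of the associated polyhedral pieces). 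Taking the finite boolean combination defining a general $S$ and splitting into disjuncts exhibits $S\cap C_\varepsilon$ as a finite union $\bigcup_i h(L_i)$ of the announced form.

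For the last assertion, as in Remark~\ref{rempropCHV2}(2) and the final bullet of Example~\ref{exa5}, I would adjoin $X_1-x_1,\ldots,X_n-x_n$ to $g_1,\ldots,g_s$, treat $x_1,\ldots,x_n$ as parameters constrained to lie in $C_\varepsilon$, and rerun the stratification; then each $t_i$ is a ratio of differences of root functions evaluated at $x$, and Theorem~\ref{propMCHV2} expresses $\tau_i=v(t_i)$ as a \qsl function of the valuations of the new ``constants'' $F_j(x_1,\ldots,x_n)\in\Kx$, with no circularity (cf.\ Remark~\ref{rempropMCHV2}). I expect the main obstacle to be not conceptual but a matter of careful bookkeeping: making precise that the coefficients $\nu_r,\delta$ produced by Theorem~\ref{thValThom} at level $m$ are exactly the quantities already controlled at levels $<m$ --- in particular that the Cohen-H\"ormander constants manufactured at each level really do become members of the next-lower stratifying family after the successive linear changes of variables --- so that the induction is genuinely self-contained.
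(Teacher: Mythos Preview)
Your proposal is correct and follows essentially the same approach as the paper: the paper's own proof merely states that the theorem is the generalization of Example~\ref{exa5}, obtained by applying Theorem~\ref{thValThom} to a stratification \`a la Cohen--H\"ormander, with the last assertion handled as in Remark~\ref{rempropCHV2}. Your write-up spells out this sketch in detail --- the inductive construction of $h$, the recursive use of Theorems~\ref{thValThom} and~\ref{propMCHV2} to propagate the \qsl property level by level, and the adjunction of $X_i-x_i$ for the final clause --- and the bookkeeping worry you flag is exactly the content the paper leaves implicit.
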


\refstepcounter{bidon}
\addcontentsline{toc}{section}{References}



\tableofcontents

\end{document}